\documentclass{amsart}

\usepackage[letterpaper,top=2cm,bottom=2cm,left=3cm,right=3cm,marginparwidth=1.75cm]{geometry}

\usepackage{amsmath, amssymb}
\usepackage{graphicx}
\usepackage{xcolor}
\usepackage[colorlinks=true, allcolors=blue]{hyperref}

\numberwithin{equation}{section}

\newtheorem{theorem}{Theorem}[section]

\newtheorem{addendum}[theorem]{Addendum}
\newtheorem{corollary}[theorem]{Corollary}
\newtheorem{prop}[theorem]{Proposition}
\newtheorem{conj}[theorem]{Conjecture}
\newtheorem{lemma}[theorem]{{\bf Lemma}}

\newcounter{hyp_counter}
\theoremstyle{definition}

\theoremstyle{remark}
\newtheorem{remark}[theorem]{Remark}

\newcommand{\R}{\mathbb R}
\newcommand{\Z}{\mathbb Z}
\newcommand{\T}{\mathbb T}

\newcommand{\U}{\mathcal U}
\newcommand{\cH}{\mathcal H}
\newcommand{\eps}{\varepsilon}

\renewcommand{\phi}{\varphi}

\newenvironment{psmallmatrix}
  {\left(\begin{smallmatrix}}
  {\end{smallmatrix}\right)}

\title{Symplecto-rigidity and bootstrap for Anosov symplectomorphisms and contact Anosov flows}
\author{Andrey Gogolev}
\address{Department of Mathematics, The Ohio State University, Columbus, OH 43210, USA}
\email{gogolyev.1@osu.edu}
\author{Federico Rodriguez Hertz}
\address{Department of Mathematics, The Pennsylvania State University, University Park, PA 16802, USA}
\email{hertz@math.psu.edu}
\thanks{The authors were partially supported by NSF grants DMS-2247747 and DMS-2453688, respectively}

\begin{document}

\begin{abstract}
This paper is a sequel to the authors' paper on rigidity of higher dimensional contact Anosov flows~\cite{GRH}. At the time the authors were unaware of much earlier work of Hamenst\"adt~\cite{Ham} devoted to the same problem. The methods of~\cite{GRH} and~\cite{Ham} are overlapping but not entirely the same. In this paper we strengthen some the results of Hamenst\"adt, by further bootstrapping the regularity of the conjugacy of 2-pinched contact Anosov flows. We also introduce a notion of symplecto-rigidity for symplectomorphisms and prove that some Anosov diffeomorphisms are symplecto-rigid. We further combine symplecto-rigidity with various rigidity techniques to establish a number of smooth rigidity results of Anosov symplectomorphisms. Some new phenomena are present in the realm of Anosov symplectomorphism. For example, while the well-known de la Llave example on the 4-torus demonstrates absence of rigidity in the space of smooth Anosov diffeomorphisms, however, we prove that it is rigid in the space of Anosov sympelctomorphisms.
\end{abstract}

\maketitle

\section{Introduction and new results}

\subsection{Symplecto-rigidity}
Let $M$ be a closed manifold equipped with a symplectic form $\omega$. A symplectomorphism $f\colon (M,\omega)\to (M,\omega)$ is {\it symplecto-rigid} if $\omega$ is the unique continuous invariant symplectic form in its cohomology class; equivalently, $f$ admits no non-trivial invariant exact $2$-form. Indeed, the difference of two cohomologous invariant symplectic forms is an invariant exact $2$-form. Conversely, if $\eta$ is a non-zero continuous invariant exact $2$-form, then $\omega+t\eta$ is closed, invariant, and cohomologous to $\omega$; since non-degeneracy is open in the $C^0$ topology, it is symplectic for all sufficiently small $t$. While the definition makes sense beyond hyperbolic dynamics, here we focus on aspects of symplecto-rigidity pertaining to Anosov symplectomorphisms, since this will be relevant to bootstrapping regularity of conjugacies between Anosov symplectomorphisms. In fact, for such purposes, we could use a weaker version of the definition. Given an $\alpha\ge0$, a symplectomorphism $f$ is called {\it $C^\alpha$-symplecto-rigid} if $\omega$ is the unique invariant $C^\alpha$-regular symplectic form in its cohomology class. 

\begin{conj} Every Anosov symplectomorphism is symplecto-rigid.
\end{conj}
Yong Fang proved this conjecture for Anosov automorphisms of nilmanifolds~\cite{Fang}. His theorem is stated for smooth forms, but the proof also works for continuous forms. For completeness, we include a straightforward Fourier proof for toral automorphisms in Appendix~\ref{app_toral_forms}, which is the case used here.

\begin{theorem}
\label{thm_dim4}
Let $L\colon(\T^4,\omega) \to(\T^4,\omega)$ be a symplectic Anosov automorphism. There exists a small $\alpha>0$ such that any Anosov symplectomorphism $f\colon (\T^4,\omega)\to(\T^4,\omega)$, which is sufficiently $C^1$ close to $L$, is $C^\alpha$-symplecto-rigid.
\end{theorem}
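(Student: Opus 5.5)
Suppose $\eta$ is an $f$-invariant exact $C^\alpha$ $2$-form; we must show $\eta=0$. The plan is to decompose $\eta$ with respect to the splitting $T\T^4=E^s\oplus E^u$ of $f$ and kill each component by a growth/decay argument. Since $f$ is $C^1$ close to $L$, $E^s$ and $E^u$ are $2$-dimensional, Hölder continuous, and $C^0$ close to those of $L$. The hyperbolicity rates are also close to those of $L$, with $L$ possibly having up to four distinct exponents $\pm\lambda_1,\pm\lambda_2$. Write $\eta=\eta^{ss}+\eta^{su}+\eta^{uu}$, where $\eta^{ss}$ is the restriction to $\Lambda^2(E^s)^*$ (extended by zero), $\eta^{uu}$ is the analogous restriction to $E^u$, and $\eta^{su}$ is the mixed part. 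Each piece is $f$-invariant, since the splitting is invariant. Evaluate $\eta^{ss}$ on a unit bivector $v\wedge w$ in $E^s$ and use invariance: $\eta(v,w)=\eta(Df^nv,Df^nw)$ for $n\to+\infty$. The right-hand side tends to zero because $\eta$ is bounded. Hence $\eta^{ss}=0$, and symmetrically $\eta^{uu}=0$ with $n\to-\infty$. This step needs only continuity.

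The real content is the mixed part $\eta^{su}$, which pairs $E^s$ with $E^u$. Note that $\omega$ itself is of this type (both $E^s$ and $E^u$ are Lagrangian). So the decay argument alone cannot work: the $\omega$-contraction $E^s\to (E^u)^*$ is an isomorphism, and $\eta^{su}$ corresponds to an invariant bundle endomorphism $A$ of $E^u$ via $\eta(v,w)=\omega(v,Aw)$, where $v\in E^s$ and $w\in E^u$. Invariance means $A$ commutes with $Df|_{E^u}$. Thus $A$ is a continuous section of $\mathrm{End}(E^u)$ commuting with the cocycle, and it is an eigenvalue-type invariant. Because $\T^4$ is $4$-dimensional and $E^u$ is $2$-dimensional, I would argue as follows.

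(i) If $L|_{E^u}$ has two distinct real exponents, then for $f$ close to $L$ the bundle $E^u$ splits dominatedly into two line fields $E^{wu}\oplus E^{su}$. A commuting $A$ must preserve this splitting, since off-diagonal terms are forced to grow or decay to zero. So $A=\mathrm{diag}(a,b)$ with $a,b$ continuous $f$-invariant functions, hence constants by transitivity. Then $\eta=a\,\omega_1+b\,\omega_2$, where $\omega_i$ is $\omega$ restricted to the pairing of the $i$-th stable and unstable lines.

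(ii) If $L$ has a complex pair or is conformal on $E^u$, I expect the same conclusion, namely that $A$ lies in a $2$-dimensional commutative family. For example, $A=a\,\mathrm{Id}+bJ$ with $J$ a measurable invariant conformal structure. This case needs more care; bunching plus the rigidity of commuting endomorphisms should force $A$ to be constant after an argument on periodic points.

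It remains to use exactness, $[\eta]=0$. When $A=c\,\mathrm{Id}$ we get $\eta=c\,\omega$, and $[\omega]\neq0$ forces $c=0$. The main obstacle is the case $a\neq b$ in (i). There $\eta=a\omega_1+b\omega_2$ and the forms $\omega_i$ are only Hölder; the question is why $\omega_1$ cannot be exact or cohomologous to a multiple of $\omega$. Here I would use the $C^\alpha$ hypothesis with small $\alpha$ tied to the gap: $\omega_1$ is a $C^\alpha$ closed invariant form, so its cohomology class is $f^*$-invariant, that is, $L^*$-invariant. Regularity bootstrapping, through the Journé lemma along the weak and strong foliations, should show that $\omega_1$ is actually smooth along leaves. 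One would then compare with the linear model by the $C^0$ conjugacy $h$: pushing forward and using Fang's result (Appendix~\ref{app_toral_forms}) for $L$, the class of $\omega_1$ must match the class of $L$'s corresponding form $\omega_1^L$, which is nonzero and independent of $[\omega]$. This forces $a=b=0$. Making the pushforward of a merely Hölder closed form along a homeomorphism meaningful, via currents and invariance under $L$ with a Fourier argument as in the appendix, is where I expect the difficulty to lie.
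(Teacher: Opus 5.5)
There is a genuine gap. Your reduction of the mixed part to an endomorphism $A$ of $E^u$ commuting with $Df|_{E^u}$ is correct. In case (i) the domination argument does give $A=\mathrm{diag}(a,b)$ with constant $a,b$. What you miss is the step that turns exactness into pointwise information. The forms $\omega\wedge\eta$ and $\eta\wedge\eta$ are invariant exact $4$-forms, so by ergodicity each is a constant multiple of $\omega\wedge\omega$. Since exact top forms integrate to zero, both vanish identically. In your notation ($\omega=\omega_1+\omega_2$, $\eta=a\omega_1+b\omega_2$, with $\omega_1\wedge\omega_2$ a volume form) this says $a+b=0$ and $ab=0$, so $a=b=0$ at once. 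In general it says $\operatorname{tr}A=\det A=0$, i.e.\ $A_x$ is nilpotent at every point. Your proposed substitute for case (i) is to bootstrap $\omega_1$ by Journ\'e's lemma and transport a H\"older closed form by the $C^0$ conjugacy $h$. That is not meaningful as stated, since a homeomorphism does not act on H\"older $2$-forms, and it is not carried out. Also, case (i) needs only continuity of $\eta$, so the $C^\alpha$ hypothesis does not enter where you put it.

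The equal-modulus case (ii) is the real content of the theorem, and your proposal gives no argument for it. The ansatz $A=a\,\mathrm{Id}+bJ$ is unfounded, for three reasons:
\begin{itemize}
\item when $L|_{E^u}$ is scalar (e.g.\ $L=C\times C$ with $C\in SL(2,\Z)$ hyperbolic), every endomorphism commutes with it;
\item a perturbation $f$ need not preserve any conformal structure;
\item periodic orbits of $f$ need not have non-real or distinct stable eigenvalues, so periodic-point arguments alone cannot kill $A$.
\end{itemize}
What is needed is the following chain.
\begin{itemize}
\item[(1)] Nilpotency of $A$, as above.
\item[(2)] The closed invariant set $K=\{\eta=0\}$ is saturated by local stable and unstable leaves, so it is $\varnothing$ or $\T^4$. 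This is exactly where $C^\alpha$ with small $\alpha$ is used, via the estimate $d(f^nx,f^ny)^\alpha\|Df^n|_{E^s(y)}\|\,\|Df^n|_{E^u(y)}\|\le C\theta^n$ with $\theta<1$ for $y\in W^s_{\rm loc}(x)$. This estimate holds because all stable eigenvalues of $L$ share one modulus.
\item[(3)] If $K=\varnothing$, then $\ker A$ is a continuous invariant line field. This is impossible when $L$ has complex eigenvalues: look at the continuation of a fixed point. In the real case, $\ker\eta=\ell^s\oplus\ell^u$ is uniformly transverse to a rational $2$-plane $P$ chosen near a graph $\{v+Jv\}$ with $\omega(v,Jv)>0$. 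Hence $\eta$ restricts to a nowhere-vanishing form on the rational torus $T_P$, so $\int_{T_P}\eta\neq0$, contradicting exactness.
\end{itemize}
Steps (2) and (3) are the missing ideas, and step (1) is what makes them work.
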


\begin{addendum}
\label{add_dim4}
Using work of Kalinin and Sadovskaya and an additional
measurable Livshits argument, the conclusion of
Theorem~\ref{thm_dim4} can be strengthened: under the same assumptions $f$
is symplecto-rigid.
\end{addendum}

\begin{theorem}
\label{theorem_residual}
A residual set of Anosov symplectomorphisms is symplecto-rigid.
\end{theorem}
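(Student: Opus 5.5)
Fix a closed symplectic manifold $(M,\omega)$. The idea is to reduce symplecto-rigidity to a generic condition on periodic data, using the Livshits-type principle that an invariant continuous $2$-form is determined by its values along periodic orbits, and then to destroy all non-trivial invariant forms by perturbing the derivative at periodic points.

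\textbf{Setup.} Let $\eta$ be a continuous $f$-invariant $2$-form, with $f$ an Anosov symplectomorphism. At a periodic point $p$ of period $n$ we have $(Df^n_p)^*\eta_p=\eta_p$. So $\eta_p$ lies in the fixed space of the linear action of $A=Df^n_p\in Sp(T_pM,\omega_p)$ on $\Lambda^2T_p^*M$. For generic $A$ (eigenvalues pairwise multiplicatively independent apart from the forced symplectic pairings $\lambda,\lambda^{-1}$), this fixed space is spanned by the forms $\omega|_{E_{\lambda}\oplus E_{\lambda^{-1}}}$ attached to reciprocal pairs of eigenvalues. Here $E_\lambda$ denotes the (generalized) eigenspace of $A$ for the eigenvalue $\lambda$. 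In particular the fixed space has dimension $d=\tfrac12\dim M$.

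\textbf{Step 1: $\eta$ is a combination of partial symplectic forms.} Let $\mathcal{G}$ be the set of $C^r$ Anosov symplectomorphisms such that every periodic point has \emph{simple spectrum} in the above sense. By Kupka--Smale-type perturbations in the symplectic category (local generating-function perturbations near the finitely many orbits of period $\le n$), $\mathcal{G}$ is residual. For $f\in\mathcal{G}$ we can write $\eta_p=\sum_i c_i(p)\,\omega_p|_{E_i(p)}$ at each periodic point $p$.

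\textbf{Step 2: the coefficients are constant.} We want to show $c_i(p)=c$ is independent of $i$ and $p$, so that $\eta=c\,\omega$ on a dense set and hence everywhere. Since $\eta$ is exact, $[\eta]=0$; this forces $c=0$ whenever $[\omega]\neq0$, and $[\omega]$ is always nonzero on a closed symplectic manifold.

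To compare different periodic points, I would use a second generic condition on $\mathcal{G}$. The idea is that the ratios of $\eta$ and $\omega$ on the pair planes must be preserved along heteroclinic connections, and a generic perturbation makes this impossible unless the ratios agree. Concretely, take a homoclinic point $q$ of $p$. The forms $f^{k}_*\eta$ near $q$ are compared via the stable and unstable holonomies. Continuity of $\eta$ forces
\[
\eta_p(v,w)=\lim_{k\to\infty}\eta_{f^{k}q}\bigl(Df^{k}v',Df^{k}w'\bigr),
\]
while the linearizations along the heteroclinic excursion mix the eigendirections $E_i(p)$ and $E_j(p)$ in a generic way. This transversality condition (the transition map does not preserve the splitting into eigen-pairs) is again open and dense, being imposed on one orbit segment at a time for each period $\le n$. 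Under it, invariance forces $c_i(p)=c_j(p)$. Horseshoe/specification arguments then give $c_i(p)=c$ for all $p$, since homoclinic classes connect all periodic points of a transitive Anosov map and continuity propagates the constant across them.

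\textbf{Main obstacle.} The hardest step is making Step~2 rigorous for merely continuous $\eta$. The limit argument above needs $\eta$ evaluated on asymptotically stretched vectors. Only the $E^s\times E^u$ component survives these limits, because the $E^s\times E^s$ and $E^u\times E^u$ components of an invariant continuous form vanish by contraction and expansion. I would therefore first prove that $\eta$ vanishes on $E^s\times E^s$ and on $E^u\times E^u$. Then $\eta$ is an invariant bilinear pairing $E^s\times E^u\to\R$, equivalently a continuous invariant bundle endomorphism $T$ of $E^u$ with $\eta=\omega(\cdot,T\cdot)$. After that I would apply the generic-cocycle results (Kalinin--Sadovskaya-type: for a residual set the derivative cocycle on $E^u$ has no nontrivial invariant continuous endomorphisms besides scalars, e.g.\ via fiber bunching or pinching at a periodic point plus irreducibility). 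These give $T=c\,\mathrm{Id}$, and exactness then gives $c=0$.
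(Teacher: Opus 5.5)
\textbf{The gap is in Step~2.} Step~1 is fine and is essentially where the paper starts. At a periodic point $p=f^n(p)$ where $D_pf^n|_{E^s(p)}$ has simple spectrum over $\mathbb C$, your formula $\eta_p=\sum_i c_i(p)\,\omega_p|_{E_i(p)}$ says that the endomorphism $B^s_p$, defined by $\eta(v^s,v^u)=\omega(B^sv^s,v^u)$, is diagonal in the eigenbasis with entries $c_i(p)$.

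In Step~2, and in its reformulation in your last paragraph, you try to prove $\eta=c\,\omega$ using only continuity and invariance. Exactness enters only at the very end, through $[\eta]=0$. That intermediate claim is false on a $C^1$-open set of Anosov symplectomorphisms, so no generic transversality condition can deliver it. Here is a counterexample:
\begin{itemize}
\item Take $L=A\times B$ on $(\mathbb T^4,\omega_0)$ with stable eigenvalues $\lambda\neq\mu$.
\item Every symplectic $f$ that is $C^1$-close to $L$ has a continuous dominated splitting $E^s=E^s_1\oplus E^s_2$.
\item Put $E^u_1=E^u\cap(E^s_2)^{\perp_\omega}$ and $E^u_2=E^u\cap(E^s_1)^{\perp_\omega}$. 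Then $V_i=E^s_i\oplus E^u_i$ are continuous, $Df$-invariant, $\omega$-orthogonal symplectic plane fields.
\item Let $\pi_1$ be the projection onto $V_1$ along $V_2$. The form $\eta(v,w)=\omega(\pi_1v,\pi_1w)$ is a continuous $f$-invariant $2$-form with $c_1(p)=1$ and $c_2(p)=0$ at every periodic point.
\end{itemize}
Consequently, your condition that the transition maps mix the eigen-pairs so that invariance forces $c_i(p)=c_j(p)$ cannot be dense. Likewise, the proposed generic-cocycle statement, that $Df|_{E^u}$ has only scalar continuous invariant endomorphisms, is false: $\pi_1|_{E^u}$ is a counterexample. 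Such $\eta$ are simply not closed. Closedness has to be used pointwise, not only cohomologically at the end.

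\textbf{The missing idea makes Step~2 unnecessary.}
\begin{enumerate}
\item For an invariant exact continuous $\eta$, each $\eta^k\wedge\omega^{d-k}$ is an invariant top-degree form. By transitivity it is a constant multiple of $\omega^d$. Being exact, it has zero integral, so it vanishes.
\item In $\omega$-dual bases of $E^s(x)$ and $E^u(x)$ one has $(\omega+t\eta)^d=\det(I+tB^s_x)\,\omega^d$. Hence $\det(I+tB^s_x)\equiv1$, so $B^s_x$ is nilpotent at every $x$.
\item At a periodic point with simple stable spectrum, $B^s_p$ commutes with $D_pf^n|_{E^s(p)}$, so it is diagonal with entries $c_i(p)$, exactly as in your Step~1. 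Nilpotency then forces every $c_i(p)=0$.
\item Take the residual set of $f$ that have, in each element of a countable basis of open sets, a periodic point with simple stable spectrum over $\mathbb C$. Each such condition is open, and it is dense by the symplectic Franks lemma. For these $f$, continuity gives $B^s\equiv0$.
\item Since $\eta$ already vanishes on $E^s\wedge E^s$ and $E^u\wedge E^u$, this gives $\eta\equiv0$.
\end{enumerate}
No homoclinic or holonomy arguments are needed.
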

{Here by a residual set we mean a countable intersection of subsets that are open and dense in the $C^1$ topology on the space of Anosov symplectomorphisms of $(M,\omega)$.}

\begin{remark} The notion of symplecto-rigidity together with the stronger property of {\it symplecto-ergodicity} will be developed in a companion paper~\cite{GRH27}. 
If $f\colon (M,\omega)\to (M,\omega)$ is a symplectomorphism then the non-vanishing top-dimensional form $vol=\omega^{\wedge d}$ is invariant. Generally speaking, symplecto-ergodicity and ergodicity with respect to $vol$ do not imply one another~\cite{GRH27}. 
\end{remark}

\subsection{Bootstrap for Anosov symplectomorphisms}

Let $(M,\omega)$ be a closed symplectic manifold and let $f\colon (M,\omega) \to (M,\omega)$ be an Anosov symplectomorphism with invariant splitting $TM=E^s\oplus E^u$. Throughout we will assume that Anosov diffeomorphisms we consider are $C^\infty$ smooth, however all results also hold in appropriate finite regularity as well.

We call $f$ {\it conformally $r$-pinched} (or just {\it $r$-pinched}) if for a sufficiently large $n$ and all $x\in M$
\begin{equation}
\label{eq_pinch_symp}
\|Df^n|_{E^u(x)}\|<m(Df^n|_{E^u(x)})^r,
\end{equation}
where $m$ denotes the conorm, $m(A)=\|A^{-1}\|^{-1}$. 

\begin{theorem}
\label{thm_symp}
Let $f_1, f_2\colon (M,\omega)\to (M,\omega)$ be symplecto-rigid
Anosov symplectomorphisms whose stable and unstable distributions are
of class $C^{1+\eps}$ for some $\eps\in(0,1)$. Assume that $f_1$ and
$f_2$ are conjugate via a $C^1$ diffeomorphism which is homotopic to
the identity. Then the conjugacy is, in fact, a $C^{2+\eps}$ diffeomorphism.
\end{theorem}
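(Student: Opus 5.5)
Let $h\colon M\to M$ be the $C^1$ conjugacy, $h\circ f_1=f_2\circ h$, homotopic to the identity. The plan is to use symplecto-rigidity of $f_1$ to show that $h$ is symplectic. The $C^{1+\eps}$ regularity of the invariant distributions will then give regularity of $h$ along stable and unstable foliations, and the two will be glued by a Journé-type argument.

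\textbf{Step 1: $h$ is symplectic.} Since $h$ is $C^1$, the pullback $h^*\omega$ is a continuous $2$-form. It is invariant under $f_1$, because $f_1^*h^*\omega=h^*f_2^*\omega=h^*\omega$. It is non-degenerate because $Dh$ is invertible, and it is closed in the weak (distributional) sense, since $h$ can be approximated in $C^1$ by smooth maps. Since $h$ is homotopic to the identity, $[h^*\omega]=[\omega]$. Hence $h^*\omega-\omega$ is a continuous invariant exact $2$-form, and symplecto-rigidity of $f_1$ (in the form stated in the introduction) gives $h^*\omega=\omega$. Some care is needed to make sense of exactness for continuous forms. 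The plan is to pair with closed currents, or to mollify: pull back by smooth approximations $h_n\to h$ in $C^1$, which are cohomologous to $\omega$, and pass to the limit using de Rham's theorem on weak forms.

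\textbf{Step 2: regularity along leaves.} The conjugacy maps $W^s_{f_1}$ to $W^s_{f_2}$ and $W^u_{f_1}$ to $W^u_{f_2}$. Since $\omega$ is invariant and $f$ is Anosov, $E^s$ and $E^u$ are Lagrangian, and $\omega$ induces a duality $E^u\cong (E^s)^*$. Using $C^{1+\eps}$ smoothness of $E^s$, the holonomy along $W^s$ between local unstable leaves is $C^{1+\eps}$. Since $h$ preserves $\omega$, the derivative of $h$ along $W^u$ is determined by its derivative along $W^s$ via this duality, transported by holonomies:
\[
\omega(Dh\,v^u,Dh\,v^s)=\omega(v^u,v^s).
\]
Concretely, $D h|_{E^u}=\big((Dh|_{E^s})^{*}\big)^{-1}$ under the $\omega$-identifications. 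The restriction $h|_{W^s}$ is $C^1$, and it is further $C^{1+\eps}$ by $f$-contraction. So $Dh|_{E^s}$ is $C^{\eps}$ along $W^u$ as well, because the holonomy is $C^{1+\eps}$ and $h$ intertwines holonomies. Therefore $Dh|_{E^u}$ is $C^{1+\eps}$ along $W^u$. Symmetrically, $Dh|_{E^s}$ is $C^{1+\eps}$ along $W^s$. Thus $h$ is uniformly $C^{2+\eps}$ along both foliations.

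\textbf{Step 3: gluing.} Apply Journé's lemma: $W^s$ and $W^u$ are transverse continuous foliations with uniformly $C^{2+\eps}$ leaves, and $h$ is uniformly $C^{2+\eps}$ along each. It follows that $h$ is $C^{2+\eps}$.

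\textbf{Main obstacle.} The main obstacle is Step 2: making precise that the $\omega$-duality forces the unstable derivative to gain regularity from the stable one. This requires that the transverse regularity of $Dh|_{E^s}$ along $W^u$ comes from the $C^{1+\eps}$ distributions. If that route stalls, the first alternative is to write $\omega$ in $C^{1+\eps}$ coordinates adapted to $E^s\oplus E^u$, in which $\omega$ is a $C^{\eps}$-varying pairing. Then solve for the unstable Jacobian of $h$ as a $C^{\eps}$ cocycle identity, and use a Livshits argument to upgrade it.
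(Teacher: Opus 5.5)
Your Steps 1 and 3 agree with the paper. Symplecto-rigidity of $f_1$ gives $h^*\omega=\omega$; your remark about weak closedness and exactness of the continuous form $h^*\omega-\omega$ is extra care the paper omits. Journ\'e's lemma then glues the leafwise regularity. The gap is in Step 2, which is where all the work lies. Your duality formula $Dh|_{E^u}=((Dh|_{E^s})^*)^{-1}$ is correct, but duality cannot create regularity. If $Dh|_{E^s}$ is only known to be $C^\eps$ along $W^u$, then $Dh|_{E^u}$ is only $C^\eps$ along $W^u$, so ``therefore $Dh|_{E^u}$ is $C^{1+\eps}$ along $W^u$'' is a non sequitur. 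The inputs you invoke are also not the relevant ones:
\begin{itemize}
\item The claim that $h|_{W^s}$ is $C^{1+\eps}$ ``by $f$-contraction'' is not a formal consequence of contraction. It is a H\"older-regularity statement for a continuous solution of the twisted cohomological equation $Dh_{f_1x}\circ Df_1|_{E^s(x)}=Df_2|_{E^s(hx)}\circ Dh_x$, and would need its own proof.
\item That claim is irrelevant anyway, because regularity of $h$ along $W^s$ says nothing about how $Dh|_{E^s}$ varies along $W^u$.
\item The relevant holonomy is the unstable one, sliding along $W^u$ between local stable leaves. What matters is not that holonomy maps are $C^{1+\eps}$ on transversals, but how their derivative at a fixed point depends on the target leaf.
\end{itemize}

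The missing mechanism is the following. For $b\in W^u_1(a)$, differentiate $h\circ H^u_{a\to b}=H^u_{h(a)\to h(b)}\circ h$ at $a$ along $E^s_1(a)$ to get
\[
Dh_b|_{E^s(b)}=D_{h(a)}H^u_{h(a)\to h(b)}\circ Dh_a|_{E^s(a)}\circ\left(D_aH^u_{a\to b}\right)^{-1}.
\]
So along an entire unstable leaf, $Dh|_{E^s}$ is a \emph{single} linear map conjugated by holonomy derivatives. Moreover, $b\mapsto D_aH^u_{a\to b}$ is $C^{1+\eps}$ along $W^u(a)$, since it solves a linear variational equation along the leaf with $C^\eps$ coefficients (Lemma~\ref{lemma}). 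Inserting this into your duality formula, using the $C^{1+\eps}$ identifications $E^u\cong(E^s)^*$ given by $\omega$, is precisely the paper's construction of Kanai-parallel frames $e_i,\bar e_i$ with $Dh(e_i)=\bar e_i$.

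Even then, your proposal skips a needed bootstrap. The factors on the $f_2$ side are $C^{1+\eps}$ functions composed with $h$, which is a priori only $C^1$. So $Dh=(\bar E\circ h)E^{-1}$ first gives only that $h$ is $C^2$ along leaves. Differentiating once more, now with $h$ and $Dh$ Lipschitz along leaves, yields $C^{2+\eps}$.

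Your fallback would not fill the gap. A Livshits argument for the unstable Jacobian controls only a scalar, which does not determine $Dh|_{E^u}$ when $\dim E^u>1$. It also produces H\"older regularity rather than an extra derivative.
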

Using uniqueness of normal forms, we further obtain the following.
\begin{corollary}
\label{cor_symp}
    Let $f_1, f_2\colon (M,\omega)\to (M,\omega)$ be symplecto-rigid Anosov symplectomorphisms which are $2$-pinched. Assume that $f_1$ and $f_2$ are conjugate via a $C^1$ diffeomorphism homotopic to the identity. Then the conjugacy is, in fact, a $C^{\infty}$ diffeomorphism.
\end{corollary}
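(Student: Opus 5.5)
The plan is to feed the $2$-pinching hypothesis into Theorem~\ref{thm_symp} to get a $C^{2+\eps}$ conjugacy. We then upgrade to $C^\infty$ leafwise using non-stationary normal forms, and finish with Journé's lemma.

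\emph{Step 1: regularity of the distributions.} Since $f_i$ is symplectic, $\omega$ pairs $E^s$ and $E^u$ nondegenerately, so $Df_i|_{E^s(x)}$ is identified with the inverse transpose of $Df_i|_{E^u(x)}$. Hence the $2$-pinching condition~\eqref{eq_pinch_symp} on $E^u$ yields the same pinching on $E^s$. It also gives the bunching inequality comparing the contraction on $E^s$ with the nonconformality on $E^u$, namely $\|Df^n|_{E^s}\|\,\|Df^n|_{E^u}\|<m(Df^n|_{E^u})^{2}$ up to the strict margin in~\eqref{eq_pinch_symp}. By Hasselblatt's bunching criterion (the $C^{1+\eps}$ version of Hirsch--Pugh--Shub/Hasselblatt), this forces $E^u$, and symmetrically $E^s$, to be of class $C^{1+\eps}$ for some $\eps>0$. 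The strict inequality supplies the extra $\eps$. This is the step where I expect to have to be most careful: I must check that exactly the exponent $2$, via the symplectic duality, gives $C^{1+\eps}$ rather than merely $C^1$. If needed, I would pass to a large iterate $f^n$, which is harmless since conjugacies of $f_i$ are conjugacies of $f_i^n$.

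\emph{Step 2: apply Theorem~\ref{thm_symp}.} With $f_1,f_2$ symplecto-rigid, with $C^{1+\eps}$ distributions, and conjugate by a $C^1$ diffeomorphism $h$ homotopic to the identity, Theorem~\ref{thm_symp} gives that $h$ is $C^{2+\eps}$.

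\emph{Step 3: normal forms.} By Guysinsky--Katok and Kalinin--Sadovskaya, a uniformly $r$-pinched extension (here the restriction of $f_i$ to its unstable foliation) admits non-stationary normal form coordinates $\Phi^i_x\colon E^u_i(x)\to W^u_i(x)$. These are $C^\infty$ along leaves, depend continuously on $x$, and conjugate $f_i|_{W^u}$ to maps in the sub-resonance group. Since the pinching exponent is $<2$ (strictly, with margin), there are no sub-resonance terms of degree $\ge2$. The normal forms are therefore linear, and the coordinates are unique among $C^{r}$-tangent-to-identity charts with $r$ slightly below $2$. Let $h$ be the conjugacy, which maps $W^u_1$ to $W^u_2$ and is $C^{2+\eps}$ along leaves. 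Then $\tilde\Phi_x=h\circ\Phi^1_x\circ (D_xh|_{E^u})^{-1}$ is another such family of charts for $f_2$. By uniqueness, $\tilde\Phi_x=\Phi^2_{h(x)}$, i.e.\ $(\Phi^2_{h(x)})^{-1}\circ h\circ\Phi^1_x=D_xh|_{E^u}$ is linear. Hence $h$ is $C^\infty$ along unstable leaves, uniformly in $x$. The same argument applied to $f_i^{-1}$ gives smoothness along stable leaves.

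\emph{Step 4: Journé.} The foliations $W^s,W^u$ are transverse with uniformly smooth leaves, so Journé's lemma upgrades $h$ to $C^\infty$. The same argument applies to $h^{-1}$, so $h$ is a $C^\infty$ diffeomorphism.
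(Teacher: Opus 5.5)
Your proposal is correct and follows the paper's proof essentially step for step: $2$-pinching (improved by compactness to $r$-pinching with $r<2$) plus symplectic duality gives $C^{1+\eps}$ distributions, Theorem~\ref{thm_symp} gives a $C^{2+\eps}$ conjugacy, uniqueness in Proposition~\ref{prop_normal_forms} applied to $h\circ\cH^1_x\circ(D_xh|_{E^u_1(x)})^{-1}$ gives leafwise $C^\infty$ regularity, and Journ\'e's lemma finishes. The only slip is the inequality displayed in Step~1: using $\|Df^n|_{E^s}\|\simeq m(Df^n|_{E^u})^{-1}$ it amounts to mere $3$-pinching, which the criterion turns only into $C^{2/3}$; the correct bunching hypothesis is $\|Df^n|_{E^s}\|\,\|Df^n|_{E^u}\|^{2/r}<m(Df^n|_{E^u})$, i.e.\ $\|Df^n|_{E^u}\|^{2/r}<m(Df^n|_{E^u})^{2}$, which is exactly $r$-pinching and yields $C^{2/r}=C^{1+\eps}$.
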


We can also give some applications to periodic data rigidity of Anosov diffeomorphisms. Consider two Anosov diffeomorphisms $f_1$ and $f_2$ which are conjugate via a homeomorphism $h$, $h\circ f_1=f_2\circ h$. Recall that $f_1$ and $f_2$
have the same {\it periodic data} with respect to $h$, if for each periodic point
$p=f_1^k(p)$ the differentials $D_pf_1^k$ and $D_{h(p)}f_2^k$ are conjugate. 
Also recall that an Anosov toral automorphism $L$ is {\it irreducible} if the characteristic polynomial of $L$ is irreducible over $\Z$.

Combining the above result with symplecto-rigidity of automorphisms~\cite{Fang} and work of Gogolev~\cite{Gog} and DeWitt-Gogolev~\cite{DWG} we have the following corollary.
\begin{corollary}
\label{cor_periodic_data}
Let $L\colon(\T^d,\omega)\to(\T^d,\omega)$ be an irreducible symplectic $2$-pinched hyperbolic automorphism such that no three of its
eigenvalues have the same modulus. Assume that $f\colon (\T^d,\omega)\to(\T^d,\omega)$ is a $C^\infty$ Anosov symplectomorphism conjugate to $L$ via a continuous conjugacy $h$ with the same periodic data. Then $h$ is a $C^\infty$ diffeomorphism.
\end{corollary}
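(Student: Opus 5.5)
Since $L$ is irreducible and no three eigenvalues share a modulus, we start from the periodic data rigidity results of Gogolev~\cite{Gog} and DeWitt--Gogolev~\cite{DWG}. These say that an Anosov diffeomorphism $f$ conjugate to $L$ with the same periodic data has a conjugacy $h$ that is $C^{1+\text{H\"older}}$. Irreducibility and the eigenvalue condition are exactly the spectral hypotheses those papers need. One caveat: they may require $f$ to be $C^1$ close to $L$, in which case we work in that neighborhood, or use the global version in~\cite{DWG} where available. We also want $h$ homotopic to the identity. Conjugacies between toral maps are homotopic to affine maps, and composing with a suitable automorphism commuting with $L$ (and a translation) puts us in the homotopic-to-identity class, as required by Theorem~\ref{thm_symp} and Corollary~\ref{cor_symp}. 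Since $h$ is $C^1$ with $C^1$ inverse (the conjugacy is bi-H\"older and the differential is invertible by~\cite{Gog,DWG}), it is a $C^1$ diffeomorphism.

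Next we verify the hypotheses of Corollary~\ref{cor_symp} for the pair $f_1=L$, $f_2=f$. Symplecto-rigidity of $L$ is Fang's theorem~\cite{Fang}, with the continuous-forms version proved in Appendix~\ref{app_toral_forms}. The $2$-pinching of $L$ is assumed. For $f$, the $2$-pinching condition~\eqref{eq_pinch_symp} follows from the periodic data. $Df^n$ at periodic points is conjugate to $L^n$, so the growth rates of $Df^n|_{E^u}$ over all invariant measures match those of $L$, by approximating invariant measures by periodic ones. The strict pinching inequality for $L$, with room to spare after passing to large $n$, then transfers to $f$ by a standard uniform-over-measures argument.

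The step I expect to be the main obstacle is symplecto-rigidity of $f$. I would get it by pulling back. If $\eta$ is a continuous $f$-invariant exact $2$-form, then $h^*\eta$ is a continuous $L$-invariant $2$-form, since $h$ is $C^1$, so pullback of continuous forms makes sense. It is exact in the appropriate weak sense: $h$ is homotopic to the identity, and $d$ commutes with $C^1$ pullback distributionally. The Fourier argument of Appendix~\ref{app_toral_forms} works for distributional exactness, so $h^*\eta=0$, and since $h$ is a $C^1$ diffeomorphism, $\eta=0$. The technical care is in checking that exactness survives $C^1$ pullback. I would handle this by smoothing $h$, or by testing against closed currents and using homotopy invariance of de Rham cohomology for continuous maps.

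With both maps symplecto-rigid, $2$-pinched, and $C^1$ conjugate by a map homotopic to the identity, Corollary~\ref{cor_symp} yields that $h$ is $C^\infty$.
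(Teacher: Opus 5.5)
Your route is the paper's: a global $C^{1+\nu}$ conjugacy from \cite[Proposition~1.6]{DWG}, symplecto-rigidity of $L$ from Appendix~\ref{app_toral_forms}, transfer to $f$ through the $C^1$ conjugacy, and then Corollary~\ref{cor_symp}. The global statement is what you need, since the corollary has no closeness hypothesis. Your direct transfer by pulling back $\eta$ is fine, and the exactness worry is moot. The Fourier argument uses only invariance and continuity to kill the non-zero modes. So $h^*\eta$ is a constant form cohomologous to $\eta$ (because $h\simeq\mathrm{id}$), hence zero.

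There is, however, a concrete error in the normalization step. Composing $h$ with automorphisms $B$ commuting with $L$ only reaches the homotopy classes $A\,Z(L)$, where $A=h_*$ and $Z(L)$ is the centralizer of $L$. The identity is among them only if $A$ commutes with $L$, i.e.\ only if $f$ is homotopic to $L$. That need not hold. For example, take $f=ALA^{-1}$ with $A$ an integral symplectic matrix not commuting with $L$; then no conjugacy from $L$ to $f$ is homotopic to the identity. The fix, as in the paper, is to replace $f$, $h$, $\omega$ by $A^{-1}fA$, $A^{-1}h$, $A^*\omega$. Here $\omega$ is constant, being $L$-invariant. The constant form $A^*\omega$ is also $L$-invariant, because $A^{-1}fA\simeq L$ and constant forms are determined by their cohomology classes.

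Separately, your derivation of $2$-pinching for $f$ from periodic data is heavier than needed and, as phrased, incomplete. Weak-$*$ approximation of invariant measures by periodic ones does not control Lyapunov exponents, so you would need Kalinin's periodic approximation theorem for exponents. Since you already have a $C^1$ conjugacy, $Df^n|_{E^u}=Dh\circ L^n|_{E^u}\circ Dh^{-1}$ gives the pinching directly. The bounded distortion of $Dh$ is absorbed for large $n$ by the strict pinching inequality for $L$. This is exactly how the paper transfers pinching.
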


We note that this is now a particular case of a recent spectacular result of Kalinin--Sadovskaya--Wang~\cite{KSW}, who do not require symplecticity or pinching. However, it is still of interest, as our bootstrap argument is different, shorter, and more elementary.

The next corollary also follows directly from  Theorem~\ref{thm_dim4} and Corollary~\ref{cor_symp} in conjunction with rigidity results in~\cite{Gog, DWG}.
\begin{corollary}
\label{cor_real_spectrum}
Let $L\colon(\T^4,\omega)\to(\T^4,\omega)$ be an irreducible symplectic $2$-pinched hyperbolic automorphism with real spectrum. Then there exists a $C^1$ neighborhood $\U\ni L$ such that if $f\in\U$ is a symplectomorphism of $(\T^4,\omega)$ and an Anosov symplectomorphism $g\colon(\T^4,\omega)\to(\T^4,\omega)$ is continuously conjugate to $f$ with the same periodic data, then the conjugacy is a $C^\infty$ diffeomorphism.
\end{corollary}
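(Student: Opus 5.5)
The plan is to reduce to Corollary~\ref{cor_symp}: produce a $C^1$ conjugacy homotopic to the identity between two symplecto-rigid, $2$-pinched Anosov symplectomorphisms, and then upgrade it to $C^\infty$.

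First I choose $\U$. By Theorem~\ref{thm_dim4} together with Addendum~\ref{add_dim4}, there is a $C^1$ neighborhood $\U$ of $L$ in which every Anosov symplectomorphism of $(\T^4,\omega)$ is symplecto-rigid. Since $L$ is $2$-pinched in the sense of~\eqref{eq_pinch_symp}, and pinching is an open condition, I shrink $\U$ so that every $f\in\U$ is also $2$-pinched. I also make $\U$ small enough that the local rigidity results of~\cite{Gog, DWG} apply. In dimension $4$ with real spectrum and $L$ irreducible, the eigenvalues are $\lambda_1^{\pm1},\lambda_2^{\pm1}$ with distinct moduli, so the no-three-equal-moduli hypothesis holds.

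Next, let $f\in\U$ and let $g$ be an Anosov symplectomorphism conjugate to $f$ by a homeomorphism $h$ with the same periodic data. The Franks--Manning conjugacy $h_f$ from $f$ to $L$ is homotopic to the identity, so $h_f\circ h^{-1}$ conjugates $g$ to $L$. Periodic data of $f$ need not match those of $L$, so~\cite{Gog, DWG} cannot be applied to the pair $(g,L)$. Instead I want to apply their results to the pair $(f,g)$ with $f$ close to $L$. Their argument starts from matching periodic data: it uses the Lyapunov splitting of $f$ into four one-dimensional invariant distributions and shows that $h$ is $C^{1+}$ along each of them. It then uses a Journé-type argument to conclude that $h$ is $C^{1+\eps}$. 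I expect the main obstacle to be here. It must be checked that the cited results cover the case where $f$ is merely near $L$ rather than equal to $L$, and that $g$ inherits a dominated one-dimensional splitting. The same periodic data force the Lyapunov exponents of $g$ at periodic points to match those of $f$, so $g$ is then $C^1$-locally like $f$ in spectrum, which I would use to get that splitting. Once this holds, $h$ is a $C^1$ diffeomorphism. Composing with a translation or an element of the centralizer if needed, $h$ is homotopic to the identity; since $h_f\circ h^{-1}$ is homotopic to the identity, the action of $h$ on homology is trivial in any case.

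Finally I deal with $g$ possibly lying outside $\U$. Then $g$ is $C^1$-conjugate to $f$, and symplecto-rigidity transfers through a $C^1$ conjugacy, since $\eta\mapsto h^*\eta$ maps invariant exact continuous $2$-forms to invariant exact continuous $2$-forms (after smoothing or in the weak sense). Pinching also transfers, because $Dh$ intertwines $Dg^n|_{E^u}$ and $Df^n|_{E^u}$ up to bounded distortion. So both $f$ and $g$ are symplecto-rigid and $2$-pinched, and Corollary~\ref{cor_symp} upgrades $h$ to a $C^\infty$ diffeomorphism.
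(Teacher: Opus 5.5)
Your outline follows the paper's proof. Both use a $C^{1+\nu_0}$ conjugacy between $f\in\U$ and an arbitrary $g$ from global periodic-data rigidity, symplecto-rigidity of $f$ from Theorem~\ref{thm_dim4}, transfer of $2$-pinching to $g$, and the bootstrap. You use Addendum~\ref{add_dim4}, which needs only a $C^1$ conjugacy, where the paper uses $C^\alpha$-rigidity with $\alpha\le\nu_0$. What you flag as the main obstacle is exactly \cite[Theorem~1.4]{DWG}, which the paper simply cites. Your heuristic that matching periodic exponents gives $g$ a dominated splitting is the content of that theorem, not something to re-derive in passing. You also do not need symplecto-rigidity of $g$. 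The bootstrap in Theorem~\ref{thm_symp} only uses $h^*\omega=\omega$, which follows from rigidity of $f$ alone, so the paper runs that argument verbatim rather than invoking Corollary~\ref{cor_symp} as a black box.

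The step that fails is your justification that $h$ is homotopic to the identity. Since $h_f$ is homotopic to the identity, the assertion that $h_f\circ h^{-1}$ is homotopic to the identity is equivalent to what you want, so the argument is circular. The claim is also false for an arbitrary conjugacy with matching periodic data. Take $f=g=L$: every $C$ in the integral centralizer of $L$ is such a conjugacy. Some of these $C$ rescale the two $L$-invariant symplectic planes by reciprocal factors $\neq1$, for example multiplication by a suitable unit of $\Z[L]$ lying in the real quadratic subfield $\mathbb Q(\lambda+\lambda^{-1})$. For such $C$ one has $C^*[\omega]\neq[\omega]$.

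Composing with a centralizer element does not fix this. The statement concerns the given conjugacy, and for non-linear $f$ the elements $h_f^{-1}Ch_f$ are merely homeomorphisms that need not respect periodic data. Since $[h^*\omega]=[\omega]$ is exactly what feeds symplecto-rigidity, the homotopy class has to come with the input. The paper takes the conjugacy homotopic to the identity directly from \cite[Theorem~1.4]{DWG}.

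Separately, irreducibility plus real spectrum does not give distinct moduli. The polynomial $x^4-4x^2+1$ is irreducible with roots $\pm\sqrt{2\pm\sqrt3}$. The no-three-equal-moduli hypothesis still holds, and Theorem~\ref{thm_dim4} treats the case $\lambda_2=-\lambda_1$. However, your picture of four one-dimensional dominated bundles does not exist in that case.
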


We finish this discussion with an interesting twist: it turns out that symplecticity can sometimes replace irreducibility in smooth rigidity problems.

First we recall the de la Llave example~\cite{L}. Let $A$ and $B$ be automorphisms of the 2-torus $\T^2$ induced by hyperbolic matrices in $SL(2,\Z)$. We will assume that the smaller eigenvalues $\lambda$ and $\mu$ of $A$ and $B$, respectively, satisfy the following pinching inequalities 
\begin{equation}
\label{eq_pinch}
0<\mu^2<\lambda<\mu<1.
\end{equation}
 Consider the linear automorphism $L=A\times B$. De la Llave considered the perturbations of the form
$$
L_{\phi}(x,y)=(Ax, By+\phi(x)), \,\,\,\,(x,y)\in\T^2\times\T^2
$$
and proved that, in general, the conjugacy between $L$ and $L_\phi$ is merely $C^{\log\mu/\log\lambda}$ despite the fact that they have the same periodic data. Note, however, that the de la Llave perturbation $L_\phi$ is not symplectic with respect to the standard symplectic form $\omega_0=dx_1\wedge dx_2+dy_1\wedge dy_2$.

\begin{theorem}
\label{cor_product}
Let $L\colon(\T^4,\omega_0)\to(\T^4,\omega_0)$ be a $2$-pinched product hyperbolic automorphism as described above. Then any sufficiently $C^1$-small symplectic perturbation $f$ with the same periodic data is $C^\infty$ conjugate to $L$.
\end{theorem}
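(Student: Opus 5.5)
Let $f$ be a $C^1$-small symplectic perturbation of $L=A\times B$ with the same periodic data as $L$, and let $h$ be the conjugacy homotopic to the identity, $h\circ f=L\circ h$. The plan has three steps: (i) show $h$ is $C^1$ (in fact $C^{1+}$); (ii) feed this into Corollary~\ref{cor_symp}, using symplecto-rigidity; (iii) upgrade to $C^\infty$.

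\textbf{Step (i).} Since $L$ is a product, it is reducible, so the irreducibility results of~\cite{Gog, DWG} do not apply directly; symplecticity has to replace irreducibility. By the pinching \eqref{eq_pinch}, $L$ has the dominated splittings $E^u=E^{wu}\oplus E^{su}$ and $E^s=E^{ws}\oplus E^{ss}$. These correspond to the eigenvalues $\lambda^{-1}<\mu^{-1}$ and $\mu<\lambda$, and they persist for $f$. The key observation is that $\omega_0$ pairs $E^{wu}$ with $E^{ws}$ and $E^{su}$ with $E^{ss}$, because $\omega_0$ is invariant and $\omega(E^i,E^j)=0$ unless the exponents cancel. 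For $f$, symplecticity and domination force the same pairing. Consequently, for every periodic point the weak-unstable and weak-stable Jacobians multiply to $1$. More importantly, the obstruction in de la Llave's example is that the strong unstable foliation of $L_\phi$ is not mapped to the linear one, which happens because $E^{wu}\oplus E^{s}$ is non-integrable or badly positioned. In the symplectic case I would instead use $E^{wu}=(E^{ws}\oplus E^{ss}\oplus E^{su})^{\omega}$-type identities: $E^{ss}\oplus E^{u}$ is $\omega$-orthogonal to $E^{ss}$. This should make $E^{wu}$ and $E^{ws}$ play symmetric roles, and should let one show that $h$ maps the strong foliations $W^{su}_f, W^{ss}_f$ to the linear ones.

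Concretely, I would run the periodic-data argument of~\cite{Gog} in two stages. First, equal periodic data plus the Livshits theorem give that the Jacobians along the one-dimensional foliations $W^{su},W^{wu},W^{ss},W^{ws}$ are cohomologous to constants. This yields that $h$ is $C^{1+}$ along each of these foliations, provided $h$ maps them to the corresponding linear foliations. For the weak foliations, this preservation is the step where reducibility usually fails. I would prove it using the pairing: $h$ preserves $W^{u}$ and $W^{s}$; by~\cite{Gog} it preserves the strong ones $W^{su},W^{ss}$; and the weak unstable foliation is characterized as the unique $Df$-invariant foliation inside $W^u$ that is the $\omega$-complement within $W^u$ to $E^{ss}$ relative to a smooth reference. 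Journé's lemma then gives $h\in C^{1+}$.

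\textbf{Steps (ii) and (iii).} Now $f$ and $L$ are $C^1$-conjugate, and $h$ is homotopic to the identity. $L$ is symplecto-rigid by Fang~\cite{Fang}, and $f$ is symplecto-rigid by Addendum~\ref{add_dim4}. Both are $2$-pinched: this holds for $L$ by \eqref{eq_pinch}, and for $f$ by $C^1$-closeness. So Corollary~\ref{cor_symp} applies directly and gives that $h$ is $C^\infty$.

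The main obstacle is Step (i), specifically proving that $h$ carries the weak foliations of $f$ to the linear ones without irreducibility. If the $\omega$-orthogonality route fails, I would first try an alternative. The pullback $h^*\omega_0$ is continuous along leaves, so it is worth trying to show that it is a genuine invariant form cohomologous to $\omega_0$. Symplecto-rigidity of $f$ would then force $h^*\omega_0=\omega_0$, and the symplectic pairing would transfer regularity from the strong directions, where~\cite{Gog} gives it, to the weak ones.
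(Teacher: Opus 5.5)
Your Steps (ii)--(iii) match the final step of the paper's proof: symplecto-rigidity of $L$ and $f$ (Lemma~\ref{lemma_toral_forms}, Addendum~\ref{add_dim4}), $2$-pinching, then Corollary~\ref{cor_symp}. The gap is Step (i), which carries all the content of the theorem.

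\textbf{The foliations are reversed.} Since $\lambda<\mu<1$, the strong directions are those of $A$ (rates $\lambda^{\pm1}$) and the weak ones are those of $B$; your eigenvalue inequalities are written backwards. The weak foliations are the ones a conjugacy at bounded distance from the identity automatically carries to linear foliations, by comparing growth rates on the universal cover. The strong foliations are exactly where the de la Llave obstruction lives, and \cite{Gog} does not give their preservation in this reducible setting. For $L_\phi$ the conjugacy is $S(x,y)=(x,y+\Psi(x))$ with $\Psi\circ A=B\Psi+\phi$. Writing $e^u_A,e^u_B$ for the unstable eigenvectors and $\phi^u,\Psi^u$ for the $e^u_B$-components, one has $\Psi^u=-\sum_{n\ge0}\mu^{n+1}\phi^u\circ A^n$. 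Its formal derivative along $e^u_A$ is a series with ratio $\mu/\lambda>1$. So in general $S$ maps linear strong unstable lines onto non-smooth curves, which therefore are not strong unstable leaves of $L_\phi$.

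\textbf{Why the symplectic arguments you propose cannot work.} Since $L_\phi$ has the same periodic data as $L$, a correct Step (i) must use the invariance of $\omega_0$ globally to kill the cocycle $\phi$. Pointwise pairing identities such as $E^{wu}=E^u\cap(E^{ss})^{\omega}$ cannot do this. Also, $E^{ss}\oplus E^u$ is not $\omega$-orthogonal to $E^{ss}$, since $\omega$ pairs $E^{ss}$ with $E^{su}$. Your fallback is circular: $h$ is a priori only H\"older, so $h^*\omega_0$ is undefined. Symplecto-rigidity can be used only once a $C^1$ conjugacy exists.

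\textbf{The ingredient you are missing.}
\begin{enumerate}
\item \cite[Theorem~C]{Gog} gives a $C^{1+\nu}$ conjugacy $H$, homotopic to the identity, with $H\circ f\circ H^{-1}=L_\phi$. This isolates the de la Llave cocycle as the only obstruction.
\item Then $\omega_1=(H^{-1})^*\omega_0$ is a genuine $C^\nu$, closed, $L_\phi$-invariant form cohomologous to $\omega_0$.
\item The $y$-plane is $\omega_1$-symplectic. Its $\omega_1$-orthogonal complement is the graph of $P_p\colon\R^2_x\to\R^2_y$, which satisfies $P_{L_\phi(p)}A=d\phi_x+BP_p$.
\item Pinching (no product $\mu^{\pm1}\lambda^{\pm1}$ has modulus $1$) forces $P$ to depend only on $x$.
\item $d\omega_1=0$ together with $[\omega_1]=[\omega_0]$ gives $P=d\Psi$ with $\Psi\in C^{1+\nu}$.
\item Integrating, and normalizing the constant with $(I-B)^{-1}$, yields $\Psi\circ A=B\Psi+\phi$. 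Hence $S(x,y)=(x,y+\Psi(x))$ is a $C^{1+\nu}$ conjugacy between $L$ and $L_\phi$, and $h=S^{-1}\circ H$ is $C^{1+\nu}$.
\end{enumerate}
In short, symplecticity upgrades the transfer function of the twisted cohomological equation from $C^{\log\mu/\log\lambda}$ to $C^{1+\nu}$. Only after this do your Steps (ii)--(iii) apply.
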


\begin{remark}
    In the case when $A=B$ (and $\lambda=\mu$) de la Llave examples do not have the same periodic data as the automorphism. This case was considered by de la Llave who established $C^\infty$ periodic data rigidity~\cite{L2} without any additional assumptions. The case of 4-dimensional automorphisms with Jordan blocks was studied by DeWitt~\cite{DW25} who proved $C^{1+}$ regularity of the conjugacy when eigenvalue and Jordan periodic data coincide for the perturbation. Under the additional assumption that the perturbation is symplectic (preserves a smooth symplectic form), Corollary~\ref{cor_symp} applies to further bootstrap regularity of conjugacy to $C^\infty$. 
\end{remark}

\subsection{Bootstrap for contact Anosov flows} 
Four years ago, we wrote the paper~\cite{GRH}, whose main result was that $C^0$-conjugate contact flows with $C^r$, $r>1$, stable and unstable distributions are, in fact, $C^{r-\eps}$-conjugate for any $\eps>0$. When writing~\cite{GRH}, we were regrettably unaware of the much earlier work of Hamenst\"adt~\cite{Ham} devoted to the same question. We are very grateful to Karen Butt, who pointed out this reference to us. The two papers overlap significantly in their results; however, their proof techniques are quite different.

Hamenst\"adt concludes $C^2$ regularity of the conjugacy for flows with $C^{1}$ distributions by first establishing Lipschitz regularity, then $C^1$ regularity, and finally bootstrapping to $C^2$ with the help of the Kanai connection. The present authors used the matching functions technique together with non-stationary normal forms~\cite{GRH}, and could bootstrap the regularity from $C^1$ to $C^\infty$ only when the distributions were assumed to be at least $C^{\sqrt{2}}$.
In this follow-up paper, we first give a self-contained exposition of Hamenst\"adt's bootstrap argument~\cite{Ham} and then combine it with our techniques from~\cite{GRH} to bootstrap the regularity of the conjugacy in the setting of contact flows.

Let $M$ be a contact manifold equipped with a contact 1-form $\alpha$. Let $\phi^t\colon M\to M$ be an Anosov flow with invariant splitting $TM=E^s\oplus \R X\oplus E^u$, $X=\partial\phi^t/\partial t$, which preserves the contact structure: $(\phi^{t})^*\alpha=\alpha$. We always normalize $\alpha$ so that $\alpha(X)=1$. We call such a flow {\it conformally $r$-pinched} (or simply $r$-pinched) if for a sufficiently large $t$ and all $x\in M$
\begin{equation}
\label{eq_pinch_cont}
\|D\phi^t|_{E^u(x)}\|<m(D\phi^t|_{E^u(x)})^r,
\end{equation}

\begin{theorem}[Hamenst\"adt~\cite{Ham}]
\label{thm_contact}
For $i=1,2$, let $(M_i,\alpha_i)$ be a closed contact manifold and let
$\phi_i^t\colon M_i\to M_i$ be a contact Anosov flow whose stable and
unstable distributions are of class $C^{1+\eps}$ for some
$\eps\in(0,1)$. If $\phi_1^t$ and $\phi_2^t$ are conjugate by a
time-preserving homeomorphism, then the conjugacy is a
$C^{2+\eps}$ diffeomorphism.
\end{theorem}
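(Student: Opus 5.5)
The plan is to follow Hamenst\"adt's three-step scheme: Lipschitz, then $C^1$, then $C^{2+\eps}$ via the Kanai connection. Let $h\colon M_1\to M_2$ be the conjugacy, $h\circ\phi_1^t=\phi_2^t\circ h$.

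\emph{Step 1: $h$ preserves the contact data.} Since $h$ is time-preserving, it maps orbits to orbits and stable/unstable leaves to stable/unstable leaves. Periods of periodic orbits coincide. I would first show that $h$ intertwines the ``temporal distance'' functions. Given $x$, $y\in W^u(x)$ and $z\in W^s(y)$, close a small $su$-quadrilateral; the time displacement needed to close it equals the integral of $\alpha$ along the path, i.e.\ the $d\alpha$-area of the quadrilateral. This quantity is defined purely topologically, so it is preserved by $h$. Infinitesimally, this says that $h$ carries the transverse symplectic structure $d\alpha_1|_{E^s\oplus E^u}$ to $d\alpha_2$ in a weak (measure/area) sense. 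This is where contactness replaces the symplecto-rigidity hypothesis of Theorem~\ref{thm_symp}.

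\emph{Step 2: regularity $C^1$.} Restricted to an unstable leaf, $h$ is a homeomorphism between leaves that matches the pairing with stable leaves given by temporal distances. Because $E^s$ and $E^u$ are $C^{1+\eps}$, the temporal distance function is $C^{1+\eps}$ and nondegenerate. Its first-order behaviour is the bilinear form $d\alpha\colon E^s\times E^u\to\R$. Using this, I would express $h|_{W^u}$ in terms of $h|_{W^s}$ and the two temporal-distance functions, and deduce that $h$ is Lipschitz along leaves. The derivative then exists a.e., and the relation $d\alpha_2(Dh\,\cdot,Dh\,\cdot)=d\alpha_1$ holds a.e. A Livshits/continuity argument, comparing with the linear duality between $E^s$ and $E^u$, upgrades this to continuity of $Dh$ along leaves. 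Journ\'e's lemma then gives $h\in C^1$, and $h^*\alpha_2=\alpha_1$ follows.

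\emph{Step 3: bootstrap via Kanai connection.} With $C^{1+\eps}$ distributions, each flow carries its Kanai connection $\nabla^i$. This is the unique $\phi^t$-invariant connection that is $C^\eps$, preserves $d\alpha$ and $E^s,E^u,X$, and has prescribed torsion; its definition uses $d\alpha$ together with Lie brackets of sections of $E^s,E^u$. Since $h$ is $C^1$, preserves $\alpha$ and the splitting, and conjugates the flows, the pulled-back connection $h^*\nabla^2$ is well defined in a weak sense. I would show by the uniqueness argument, using invariance and hyperbolicity to kill the difference tensor, that $h^*\nabla^2=\nabla^1$. Then $h$ is an affine map between $C^\eps$ connections. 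Affine maps satisfy the geodesic ODE $\ddot h=-\Gamma(\dot h,\dot h)$ in coordinates, so $D^2h$ is expressed through Christoffel symbols of class $C^\eps$. This yields $h\in C^{2+\eps}$.

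The main obstacle is Step 3. Because $h$ is only $C^1$, pulling back $\nabla^2$ does not make sense classically. I would handle this by working leafwise, where the Kanai connection is flat along $W^s$ and $W^u$. There, affineness amounts to $h$ being affine in the flat affine structures on leaves, which can be verified via invariance and contraction.
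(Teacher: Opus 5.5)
The gap is in Step 3, at the obstacle you flag yourself, and the remedy you propose does not work. You want to verify that $h$ is affine for the leafwise flat Kanai structures ``via invariance and contraction''. That needs a pinching hypothesis, which the theorem does not make. Suppose you try to show that $g_x=(\cH^2_{hx})^{-1}\circ h\circ\cH^1_x$ is linear by writing $g_x=(D\phi_2^{-T})^{-1}\circ g_{\phi_1^{-T}x}\circ D\phi_1^{-T}$ and letting $T\to\infty$. The nonlinear error is then of order $\|D\phi^T|_{E^u}\|\,m(D\phi^T|_{E^u})^{-1}\cdot o(1)$, or $\|D\phi^T|_{E^u}\|\,m(D\phi^T|_{E^u})^{-1-\eps}$ if $h$ is $C^{1+\eps}$, and neither need tend to zero. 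Similarly, contraction kills the $E^u\otimes E^u\to E^u$ component of a flow-invariant difference tensor only under $2$-pinching. Invariance genuinely cannot force linearity here: $(v_1,v_2)\mapsto(v_1,v_2+cv_1^2)$ commutes with $\operatorname{diag}(e^{t},e^{2t})$. This is exactly the resonant situation of complex hyperbolic geodesic flows, which have smooth distributions and are covered by the theorem. Separately, uniqueness of the Kanai connection is algebraic (Koszul type), not dynamical; but, as you note, $h^*\nabla^2$ is not even defined for $h\in C^1$.

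The missing idea is that Kanai parallel transport is \emph{natural} under $C^1$ conjugacies, so you need neither second derivatives of $h$ nor dynamics. Along a strong unstable leaf, transport is defined by $\hat V_t=\hat V_0\circ(DH^u_t)^{-1}$ with $\hat V=d\alpha(V,\cdot)|_{E^s}$. It uses only the derivative of unstable holonomy on $E^s$ and the $d\alpha$-duality between $E^s$ and $E^u$. Since $h$ is $C^1$ and maps strong stable and unstable leaves of $\phi_1^t$ to those of $\phi_2^t$, it intertwines holonomies, so $Dh\circ DH^u_1=DH^u_2\circ Dh$ on $E^s$. Also $h^*\alpha_2=\alpha_1$ gives $h^*d\alpha_2=d\alpha_1$. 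Hence $Dh(e_i)=\bar e_i$ for Kanai-parallel frames with $\bar e_i(ha)=D_ah\,e_i(a)$; in coordinates, $Dh=(\bar E\circ h)E^{-1}$ along leaves. The frames are $C^{1+\eps}$ along leaves by the holonomy regularity lemma and $h$ is $C^1$, so the right-hand side is $C^1$. Thus $h$ is $C^2$ along leaves, and differentiating once more gives $C^{2+\eps}$. This identity, not a geodesic-type ODE, is the bootstrap. You then still need the symmetric argument on stable leaves, smoothness along orbits, and Journ\'e's lemma twice: first inside weak stable leaves, then against strong unstable leaves.

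Step 2 is also vaguer than necessary. The maps $y\mapsto(\tau(y,z_1),\dots,\tau(y,z_n))$ built from temporal distances are $C^{1+\eps}$ local diffeomorphisms, by non-degeneracy of $d\alpha$ on $E^s\times E^u$. Preservation of temporal distances then gives $h|_{W^u}=\Phi_2^{-1}\circ\Phi_1$, which is $C^{1+\eps}$ directly by the inverse function theorem. By contrast, mere continuity of leafwise derivatives would not feed into Journ\'e's lemma, which is stated for $C^{n,\alpha}$ with $\alpha>0$.
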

Utilizing uniqueness of normal forms we have.
\begin{corollary}
\label{cor_contact}
For $i=1,2$, let $\phi_i^t\colon M_i\to M_i$ be a contact Anosov flow
which is $2$-pinched. If $\phi_1^t$ and $\phi_2^t$ are conjugate by a
time-preserving homeomorphism, then the conjugacy is a $C^{\infty}$
diffeomorphism.
\end{corollary}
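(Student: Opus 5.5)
The plan is to reduce Corollary~\ref{cor_contact} to Theorem~\ref{thm_contact} and then to use uniqueness of non-stationary normal forms along the unstable and stable foliations to push the regularity from $C^{2+\eps}$ to $C^\infty$.

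\emph{Step 1: regularity of the distributions.} For a contact Anosov flow, $d\alpha$ restricted to $E^s\oplus E^u$ is an invariant symplectic pairing. Hence the contraction rates on $E^s$ are inverse to the expansion rates on $E^u$ pointwise. The $2$-pinching condition~\eqref{eq_pinch_cont} on $E^u$ therefore gives the corresponding bunching on $E^s$, in the form $\|D\phi^{-t}|_{E^s}\|<m(D\phi^{-t}|_{E^s})^2$. The standard Hirsch--Pugh--Shub / Hasselblatt regularity criterion says that $E^u$ and $E^s$ are $C^{1+\eps}$ (in fact $C^{2-\delta}$) once the bunching exponent strictly exceeds the one needed for $C^1$. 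For contact flows the weak-stable and strong distributions are related through $\ker\alpha$. So under strict $2$-pinching both $E^s$ and $E^u$ are $C^{1+\eps}$ for some $\eps>0$. Theorem~\ref{thm_contact} then applies, and the conjugacy $h$ is a $C^{2+\eps}$ diffeomorphism.

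\emph{Step 2: normal forms.} Under $2$-pinching (strictly less than $2$ in the fiber bunching sense), the unstable foliation $W^u$ of each $\phi_i^t$ carries a unique $C^\infty$ family of non-stationary normal form coordinates $\Phi_x\colon E^u(x)\to W^u(x)$. In these coordinates $\phi_i^t$ acts by the linear maps $D\phi_i^t|_{E^u}$, because the narrow band of Lyapunov exponents admits no resonances of order $\ge2$ (Guysinsky--Katok, Kalinin--Sadovskaya). The key fact is uniqueness: any family of $C^{1+}$ (in fact $C^2$ suffices comfortably) coordinate charts that linearizes the dynamics along leaves and depends continuously on $x$ coincides with the normal form coordinates, after normalizing $D_0\Phi_x=\mathrm{Id}$. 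Since $h$ is $C^{2+\eps}$ and intertwines the flows, the family $h^{-1}\circ\Phi^{(2)}_{h(x)}\circ D_xh|_{E^u}$ linearizes $\phi_1^t$ along $W^u_1$. By uniqueness it equals $\Phi^{(1)}_x$. Therefore in normal form coordinates $h|_{W^u(x)}$ is the linear map $D_xh|_{E^u}$, so $h$ is $C^\infty$ along unstable leaves, with derivatives continuous in $x$. The same holds along stable leaves, by the same argument applied to $\phi^{-t}$. Along the flow direction, $h$ is smooth because it is time-preserving.

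\emph{Step 3: Journé.} Along $W^{cs}$, $h$ is smooth because it is smooth on $W^s$ leaves and commutes with the flow. The same argument gives smoothness along $W^{cu}$. Journé's lemma, applied to the transverse pair $W^{cs}$, $W^u$, then yields that $h$ is $C^\infty$.

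The main obstacle is Step 2. One must check that $D_xh|_{E^u}$ is Hölder in $x$ and that the pinching is strong enough for the uniqueness class of normal forms to include $C^{2+\eps}$ (indeed $C^{1+}$) leafwise maps. I would handle this by citing the Kalinin--Sadovskaya uniqueness statement for conformally pinched cocycles, which only needs $C^{1+}$ leafwise regularity. That is already guaranteed by Theorem~\ref{thm_contact}.
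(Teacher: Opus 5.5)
Your proposal is correct and follows the paper's proof essentially step for step. Compactness turns $2$-pinching into $r$-pinching for some $r<2$, which gives $C^{1+\eps}$ distributions; Theorem~\ref{thm_contact} then upgrades the conjugacy to $C^{2+\eps}$; uniqueness of the flow version of the non-stationary linearization gives $C^\infty$ regularity along strong stable and unstable leaves; and Journ\'e's lemma finishes. Your direct argument for smoothness along weak stable leaves via $h\circ\phi_1^t=\phi_2^t\circ h$ is a harmless shortcut for the paper's first application of Journ\'e's lemma. One small slip: the parenthetical ``in fact $C^{2-\delta}$'' is not justified. The hypothesis only yields $C^{2/r}$ regularity with $r$ slightly below $2$, that is, $C^{1+\eps}$ with possibly small $\eps$, but this is all the argument uses.
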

In particular, this corollary applies to geodesic flows of metrics with $\frac14$-pinched negative sectional curvature (cf.~\cite[Corollary~1.4]{GRH}).

\subsection*{Organization}
Section~2 reviews bunching, non-stationary linearization, and Kanai parallelism. Section~3 proves the bootstrap results for contact Anosov flows, and Section~4 proves the smooth-conjugacy results for Anosov symplectomorphisms. Section~5 establishes the symplecto-rigidity results. The appendices contain the Fourier argument for invariant forms on tori and the regularity calculation for holonomy-invariant vector fields.

\subsection*{Acknowledgments}
Most of all we are very thankful to Karen Butt for a discussion at Oberwolfach in July 2025 and for pointing out the reference~\cite{Ham}. We also would like to thank Boris Kalinin for discussions on rigidity.
\subsection*{AI} The authors completed the bulk of the paper in the summer of 2025 without AI assistance. While finishing the proofs of symplecto-rigidity in the summer of 2026, conversations with Claude Opus 4.8 and ChatGPT 5.6 were helpful. 
The paper was written by the authors, except that initial drafts of the proofs of Theorem~\ref{theorem_residual} and Appendix~A were generated by ChatGPT and subsequently edited by the authors. We also used Claude Opus 4.8 and ChatGPT 5.6 to check and proofread the paper. Needless to say, the authors bear full responsibility for its correctness.

\section{Preliminaries}

\subsection{Bunching and regularity of distributions}
\label{sec:bunching}
We fix a background Riemannian metric $\langle \cdot,\cdot\rangle$ on $M$, which induces the norm $\|\cdot \|$. We will recall the well-known bunching and pinching conditions for Anosov diffeomorphisms and their implications for the regularity of invariant distributions. For Anosov flows, the parallel discussion of regularity is more subtle because strong stable subbundles are not as regular as weak subbundles. However, in the contact case, the strong subbundles are the intersections of the weak subbundles with the kernel of the contact form, which is smooth. Hence the strong subbundles have exactly the same regularity as the weak ones, and the discussion is fully analogous to the diffeomorphism case.

The stable distribution $E^s$ of an Anosov diffeomorphism $f$ satisfies the {\it $r$-bunching condition} if for some $n>0$ and all $x$
$$
\|Df^n|_{E^s(x)}\|\cdot \|Df^n|_{E^u(x)}\|^{r}<m(Df^n|_{E^u(x)})
.$$
This condition implies $C^r$ regularity of $E^s$~\cite{Hass}. Analogously, the condition
$$
\|Df^n|_{E^s(x)}\|<m(Df^n|_{E^u(x)})\cdot  m(Df^n|_{E^s(x)})^{r}
$$
implies that the unstable distribution $E^{u}$ is $C^{r}$.

For Anosov symplectomorphisms, the symmetry between the stable and unstable subbundles simplifies the bunching conditions. Indeed, if $v\in E^s(x)$ is non-zero, then $\omega(v,\cdot)\colon E^u(x)\to\R$ is a nonzero linear functional and hence is represented by the scalar product as $\omega(v,\cdot)=\langle \cdot,\hat v\rangle$ for some $\hat v\in E^u(x)$. Therefore, we have $m(Df^n|_{E^s(x)})\simeq\|Df^n|_{E^u(x)}\|^{-1}$ and $\|Df^n|_{E^s(x)}\|\simeq m(Df^n|_{E^u(x)})^{-1}$ (up to a uniform constant). This symmetry immediately implies that the two $r$-bunching conditions above are equivalent and reduce to
$$
\|Df^n|_{E^u(x)}\|<m(Df^n|_{E^u(x)})^{\frac2r}.
$$
Consequently, the $r$-pinching assumption~\eqref{eq_pinch_symp} implies that both distributions $E^s$ and $E^u$ satisfy $\frac2r$-bunching and hence are $C^{2/r}$-regular.

\subsection{Pinching and uniqueness of non-stationary linearization}
We will rely a lot on Guysinsky-Katok non-stationary normal form theory~\cite{GK}. We the following normal form result for 2-pinched diffeomorphisms is due to Sadovskaya.
\begin{prop}[\cite{GK,Sad05}]
\label{prop_normal_forms}
	Let $f\colon M\to M$ be an Anosov diffeomorphism. Assume that $f$ is 2-pinched, that is, there exists $n>0$ such that for all $x\in M$
	$$
    \|Df^n|_{E^u(x)}\|<m(Df^n|_{E^u(x)})^{2}.
 $$
	Then for all $x\in M$ there exists $\cH_x:E^u(x)\to W^u(x)$ such that
	\begin{enumerate}
		\item $\cH_x$ is a smooth diffeomorphism for all $x\in M$;
		\item $\cH_x(0)=x$;
		\item $D_0\cH_x=id$;
		\item $\cH_{f^nx}\circ Df^n|_{E^u(x)} =f^n\circ \cH_x$ for all $n$;
        \item Non-stationary linearization $\{\cH_x, x\in M\}$ is unique among $C^2$ smooth linearizations satisfying properties $(2)-(4)$ above.
 	\end{enumerate}
\end{prop}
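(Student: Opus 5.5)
We construct $\cH_x$ as a limit of pulled-back charts, following Guysinsky--Katok and Sadovskaya. First fix a smooth family of local charts $\Phi_x\colon E^u(x)\to W^u(x)$ with $\Phi_x(0)=x$ and $D_0\Phi_x=id$. For instance, take the exponential map followed by projection to $W^u(x)$ along a smooth transversal family. These charts are smooth along each leaf, with derivatives continuous in $x$. In these coordinates $f$ becomes a non-stationary sequence of maps
$$
F_x=\Phi_{fx}^{-1}\circ f\circ\Phi_x\colon E^u(x)\to E^u(fx),\qquad F_x(0)=0,\quad D_0F_x=A_x:=Df|_{E^u(x)}.
$$
We look for $\cH_x=\Phi_x\circ h_x$ with $h_x(0)=0$ and $D_0h_x=id$ satisfying $h_{fx}\circ A_x=F_x\circ h_x$. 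Equivalently, we take
$$
h_x=\lim_{n\to\infty}F_{f^{-n}x}^{(n)}\circ A^{(-n)}_x,
$$
where $F^{(n)}$ and $A^{(n)}$ denote the compositions along the orbit. Because the linearization is only needed on unstable leaves, we iterate backwards: $A^{(-n)}_x$ contracts $E^u(x)$ at rate between $\|Df^{-n}|_{E^u}\|$ and $m(Df^{-n}|_{E^u})$.

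\textbf{Convergence.} Write $F_y=A_y+R_y$ with $R_y=O(|v|^2)$. The $n$-th and $(n+1)$-st terms differ by $F^{(n)}\circ(F_{f^{-n-1}x}-A_{f^{-n-1}x})\circ A^{(-n-1)}_x$ evaluated at small vectors. The quadratic error at scale $\|A^{(-n-1)}\|\,|v|$ is $O(\|Df^{-n}|_{E^u}\|^2|v|^2)$. It is then amplified by $DF^{(n)}$, which is at most about $\|Df^{n}|_{E^u(f^{-n}x)}\|$ on the tiny ball. The 2-pinching condition, applied to $f^{-n}x$, says
$$
\|Df^n|_{E^u}\|\cdot m(Df^n|_{E^u})^{-2}\le C\theta^n,\qquad \theta<1,
$$
so the increments are summable. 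The same estimate on derivatives up to order $k$ gives $C^k$ convergence on compact pieces of leaves. The key point is that under 2-pinching no resonances of order $\ge2$ survive: each higher Taylor term is killed by the same contraction, since $\|A\|\,m(A)^{-j}\to0$ for all $j\ge2$.

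\textbf{Extension and properties.} We globalize $h_x$ to all of $E^u(x)$ by the equivariance itself, setting
$$
\cH_x=f^n\circ\cH_{f^{-n}x}\circ Df^{-n}|_{E^u(x)}.
$$
Property (1) follows from leafwise $C^\infty$ convergence, properties (2) and (3) from the normalization of $h_x$, and property (4) by construction.

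\textbf{Uniqueness (5).} Suppose $\cH'$ is another $C^2$ family satisfying (2)--(4). Then $g_x=\cH_x^{-1}\circ\cH'_x$ is a $C^2$ map of $E^u(x)$ with $g_x(0)=0$, $D_0g_x=id$, and $g_{fx}\circ A_x=A_x\circ g_x$. Writing $g_x=id+\psi_x$ with $\psi_x(v)=O(|v|^2)$, uniformly in $x$ by compactness and $C^2$ continuity, we get
$$
\psi_x=A^{(n)}_{f^{-n}x}\circ\psi_{f^{-n}x}\circ A^{(-n)}_x .
$$
This gives $|\psi_x(v)|\le C\,\|Df^n|_{E^u}\|\,m(Df^n|_{E^u})^{-2}|v|^2\to0$, so $\psi\equiv0$. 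The main obstacle is this uniformity requirement: a $C^2$ linearization needs continuity of second derivatives in $x$, or at least uniform bounds, to make the $O(|v|^2)$ constant uniform. I would state (5) with that understanding. This step is the one that genuinely uses pinching rather than mere bunching.
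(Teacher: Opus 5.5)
The paper does not prove this proposition; it quotes it from Guysinsky--Katok and Sadovskaya. The Kanai-parallelism construction in \S2.4 is not a substitute: it needs symplecticity and $C^r$ distributions, gives only a $C^{r+1}$ linearization, and uses item (5) to identify it with $\cH$. Your outline is the standard direct-limit argument, and two parts of it are sound. The $C^0$ convergence estimate is fine. The bound $\sup\|DF^{(n)}\|\lesssim\|Df^n|_{E^u}\|$ on the shrinking ball needs a short distortion argument; alternatively, work throughout with products of one-step norms and conorms, to which the pointwise pinching hypothesis applies directly. The uniqueness argument is right, provided (5) is read, as you propose, for families with $C^2$ bounds near $0$ uniform in $x$. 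That is the reading used in Corollary~\ref{cor_symp}, where $h\circ\cH^1_x\circ(Dh_x)^{-1}$ is uniformly $C^2$ because $h$ is $C^{2+\eps}$. Your closing remark is inaccurate, though. Existence of a \emph{linear} normal form uses 2-pinching just as much, and your own convergence estimate uses it; without pinching one only gets polynomial sub-resonance normal forms.

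The gap is item (1), which is the real content of the existence part and which you settle in one sentence. Write $T_n=F^{(n)}_{f^{-n}x}\circ A^{(-n)}_x$. The estimate you wrote does not extend to derivatives verbatim. Already for $k=1$, $DT_{n+1}(v)-DT_n(v)$ contains the term $\bigl(DF^{(n)}_a-DF^{(n)}_b\bigr)A^{(-n)}_x$ with $|a-b|\lesssim\|A^{(-n)}_x\|^2$. Bounding it by $\sup\|D^2F^{(n)}\|\,|a-b|\,\|A^{(-n)}_x\|$, with the chain-rule bound $\sup\|D^2F^{(n)}\|\lesssim\|Df^n|_{E^u}\|^2$ (which cannot be improved in general), requires $\|Df^n|_{E^u}\|^2<m(Df^n|_{E^u})^3$. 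At order $k$ it requires $\|Df^n|_{E^u}\|^{k+1}<m(Df^n|_{E^u})^{k+2}$, which tends to conformality as $k$ grows.

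The missing ingredient is that the vectors fed into $D^jF$ at step $i$ are $DF^{(i)}_{w_0}A^{(-n)}_xe$, where $w_0=A^{(-n)}_xv$. These are comparable to $A^{(-(n-i))}_xe$, not merely bounded by $\|Df^i\|\,\|Df^{-n}\|$. To see this, set $y_j=f^{j-n}x$ and $w_j=F^{(j)}(w_0)$, and write $DF_{w_j}=A_{y_j}(I+E_j)$ with $\|E_j\|\lesssim|w_j|$. Conjugating the errors by the linear cocycle gives $DF^{(i)}_{w_0}A^{(-n)}_x=\prod_{j<i}(I+\hat E_j)\,A^{(-(n-i))}_x$, with $\|\hat E_j\|\lesssim\|A^{(i-j)}\|\,m(A^{(i-j)})^{-2}$, which is summable by pinching. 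Then each Fa\`a di Bruno term containing $D^jF$, $j\ge2$, at step $i$ is $\lesssim\|A^{(n-i)}\|\,m(A^{(n-i)})^{-j}\le C\theta^{n-i}$. These bounds are summable uniformly in $n$ and the terms converge individually, which gives $C^k$ convergence.

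Equivalently, run a fibre-contraction argument on $h_x=F_{f^{-1}x}\circ h_{f^{-1}x}\circ A_{f^{-1}x}^{-1}$, for the iterate given by the hypothesis. For $D^kh$ with $k\ge2$, the linear part contracts at rate $\|A\|\,m(A)^{-k}<1$. For $Dh$, use the weighted norm $\sup_{0<|v|\le r}|Dh_x(v)-I|/|v|$, on which the rate is essentially $\|A\|\,m(A)^{-2}<1$. Your ``no resonances of order $\ge2$'' heuristic points at this mechanism, but one of these arguments has to be supplied.
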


For more details, especially discussion of uniqueness, we also refer to~\cite{Kal} and to~\cite{GRH} for a proof of an even more precise uniqueness statement.

\subsection{Kanai parallelism}
\label{sec:kanai}

Kanai constructed a connection adapted to an Anosov symplectomorphism with $C^r$, $r\ge 1$, invariant distributions~\cite{K1,K2}. We recall the idea and provide some details which will be important for us. Let $\gamma\colon[0,1]\to W^u(x)$ be a smooth curve connecting $a=\gamma(0)$ to $b=\gamma(1)$. Given a vector $V_0\in E^u(a)$ Kanai defines its parallel transport $\{V_t\}$ along $\gamma$ in the following way. Let $H^u_t\colon W^s_{loc}(a)\to W^s_{loc}(\gamma(t))$ be the family of unstable holonomies along $\gamma$. By our assumption these holonomies are $C^r$ and given a vector $W_0\in E^s(a)$ we obtain a vector field $W_t=DH^u_t(W_0)\in E^s(\gamma(t))$.
\begin{lemma} \label{lemma}
The holonomy invariant vector field $W_t$ is $C^{r}$.
\end{lemma}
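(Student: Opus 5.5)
We must show that $t\mapsto W_t=D_aH^u_t(W_0)$ is $C^r$ as a function of $t$, when $\gamma$ is smooth and $E^s,E^u$ are $C^r$. The plan is to write $W_t$ as the derivative of a $C^r$ map of two variables and then to gain the missing derivative from the smoothness of the leaves.

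\textbf{Setup.} Work in a smooth chart near $\gamma([0,1])$; cover $\gamma$ by finitely many charts and concatenate, using the cocycle property $H^u_{t}=H^u_{t,s}\circ H^u_s$. Write the holonomy as one map of two variables,
$$
\Phi(t,y)=H^u_t(y),\qquad t\in[0,1],\ y\in W^s_{loc}(a),
$$
where $\Phi(t,y)$ is the point of $W^u_{loc}(y)\cap W^s_{loc}(\gamma(t))$. By the standing assumption, $E^s$ and $E^u$ are $C^r$. Hence the foliations $W^s$ and $W^u$ are $C^r$ foliations with $C^\infty$ leaves, and there are local $C^r$ foliation charts.

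\textbf{Regularity of $\Phi$.} In a $C^r$ chart $\Psi$ straightening both foliations simultaneously (the local product structure map $(u,s)\mapsto [u,s]$), the holonomy becomes a translation in the $u$-coordinate. Hence $\Phi$ is jointly $C^r$ in $(t,y)$: it is the composition of $\gamma$, the $C^r$ chart and the smooth curve. Then
$$
W_t=\partial_y\Phi(t,a)\,W_0
$$
is a priori only $C^{r-1}$ in $t$, since we lose one derivative. This loss is the main obstacle.

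\textbf{Gaining the derivative.} We use the smoothness of the leaves together with holonomy invariance along the stable direction. Take a smooth curve $\sigma(s)\subset W^s_{loc}(a)$ with $\sigma'(0)=W_0$. Then
$$
W_t=\partial_s\big|_{s=0}\Phi(t,\sigma(s)).
$$
For each fixed $s$, the curve $t\mapsto \Phi(t,\sigma(s))$ lies in the smooth leaf $W^u(\sigma(s))$. We want to show that the mixed derivatives $\partial_t^k\partial_s\Phi$ exist and are continuous for $k\le r$ (with the appropriate Hölder modification for non-integer $r$). Swap the order of differentiation: $\partial_t\Phi(t,\sigma(s))$ is the $C^r$ vector field $E^u$-component determined by $\gamma'(t)$ transported along stable holonomy. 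More concretely, $\partial_t\Phi(t,y)=D H^s_{t,y}(\gamma'(t))$, where $H^s_{t,y}$ is the stable holonomy from $W^u_{loc}(\gamma(t))$ to $W^u_{loc}(\Phi(t,y))$. This derivative is $C^r$ in $y$ because the stable holonomies between unstable leaves are $C^r$ with derivatives depending $C^{r-1}$ but, importantly, $E^u$ itself is $C^r$. So $\partial_t\Phi$ is a $C^{r}$ function of $(t,y)$ in the $y$-direction, giving $\partial_s\partial_t\Phi$ of class $C^{r-1}$ in $t$. Hence $\partial_t W_t$ is $C^{r-1}$, i.e.\ $W_t$ is $C^r$.

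The delicate point is justifying the exchange of derivatives and the claimed $C^r$ dependence of $D H^s_{t,y}(\gamma'(t))$. I would do this by writing everything in the $C^r$ product chart and reducing it to an ODE: $W_t$ satisfies $\dot W_t=\nabla_{\gamma'}$-type equation $\dot W_t = A(t)W_t$, where $A(t)$ is built from $D(E^u)$ restricted along $\gamma$. The coefficients of $A$ are then $C^{r-1}$, and the ODE yields $C^r$ solutions. Alternatively, I would fall back on the explicit coordinate computation, deferred to the appendix.
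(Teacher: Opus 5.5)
There is a genuine gap. Your setup is fine, and you correctly note that the $C^r$ product chart only gives $W_t\in C^{r-1}$. The gap is in the step meant to recover the lost derivative.

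For fixed $t$, the field $y\mapsto\partial_t\Phi(t,y)=DH^s(\gamma'(t))$ is, up to composition with the $C^r$ map $H^u_t$, the \emph{stable}-holonomy-invariant field along $W^s_{loc}(\gamma(t))$ generated by $\gamma'(t)$. Claiming it is $C^r$ in $y$ is therefore the lemma itself with $E^s$ and $E^u$ interchanged. Your own justification concedes that the holonomy derivatives are only $C^{r-1}$, and $C^r$ regularity of $E^u$ controls only the subspace in which this vector lies, not the vector itself.

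Even granting that claim, regularity in $y$ at fixed $t$ says nothing about the $t$-dependence of $\partial_y\partial_t\Phi$. To get $\partial_tW_t\in C^{r-1}$ you need one $y$-derivative plus $r$ further $t$-derivatives. That is one derivative more than the joint $C^r$ regularity of $\Phi$ provides, which is the original problem. For $r=1$, the interchange $\partial_t\partial_s\Phi=\partial_s\partial_t\Phi$ already presupposes that the mixed partial exists and is continuous, which is the whole content of the lemma.

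Your fallback ODE is the right mechanism, but it is only asserted. Moreover, in the $C^r$ product chart it is vacuous: there $E^u$ is flat and $W_t$ is constant, so the entire loss sits in $D\Psi$, which is only $C^{r-1}$.

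The missing idea is to measure the holonomy against \emph{smooth} transversals in a \emph{smooth} chart. Then the $t$-derivative is an explicit $C^r$ expression rather than a stable-holonomy derivative. This is what the paper does. Take $\gamma(x)=(x,0)$, $E^u=\operatorname{span}\{\partial_x+F\partial_y\}$ and $E^s=\operatorname{span}\{g\partial_x+\partial_y\}$ with $F,g\in C^r$. The unstable leaves are graphs $y=\alpha(x,y_0)$ with $\alpha_x=F(x,\alpha)$ and $\alpha(0,y_0)=y_0$.

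Since $\alpha_x=F(x,\alpha)$ is $C^1$, the mixed partial $\partial_y\alpha_x=F_y(x,\alpha)\alpha_y$ exists and is continuous. This justifies the interchange (the paper passes to the limit in integrated difference quotients) and gives the variational equation $\partial_x\alpha_y=F_y(x,\alpha)\alpha_y$. Along $y=0$ its coefficient $F_y(x,0)$ is $C^{r-1}$, so $\alpha_y(\cdot,0)\in C^r$. Then $W(x)=\alpha_y(x,0)\bigl(\partial_y+g(x,0)\partial_x\bigr)$, and the projection to $E^s$ along $E^u$ contributes only a $C^r$ factor.

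The same idea also has a coordinate-free form in any dimension. Let $X$ be a $C^r$ field tangent to $E^u$ having $\gamma$ as an integral curve, with flow $\Theta^t$. Then $W_t=\Pi^s_{\gamma(t)}D_a\Theta^tW_0$, where $\Pi^s$ is the $C^r$ projection onto $E^s$ along $E^u$. Here $D_a\Theta^t$ solves a linear ODE with the $C^{r-1}$ coefficient $DX(\gamma(t))$, so $W_t$ is $C^r$.
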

We give a sketch of the proof of this calculus lemma in the appendix.

Now a vector $V_0\in E^u(a)$ defines a linear functional $\hat V_0\colon E^s(a)\to\R$ given by $\hat V_0(\cdot)=\omega(V_0,\cdot)$. We extend it along $\gamma$ by
$$
\hat V_t=\hat V_0\circ(D_aH^u_t)^{-1}.
$$
Again, since $\omega$ is non-degenerate, there exists a unique $V_t\in E^u(\gamma(t))$ such that $\hat V_t=\omega(V_t,\cdot)$.

\begin{lemma} \label{lemma2} The dual field $\hat V_t$ and the vector field $V_t$ are $C^{r}$ regular.
\end{lemma}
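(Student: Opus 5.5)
The plan is to reduce Lemma~\ref{lemma2} to Lemma~\ref{lemma} by expressing everything in terms of smooth or $C^r$ frames along $\gamma$.

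\textbf{Step 1: the dual field.} Fix a basis $W^1_0,\dots,W^k_0$ of $E^s(a)$, where $k=\dim E^s$, and transport it by $W^j_t=D_aH^u_t(W^j_0)$. By Lemma~\ref{lemma} each $W^j_t$ is $C^r$ in $t$. Since the holonomies are diffeomorphisms, $\{W^j_t\}$ remains a basis of $E^s(\gamma(t))$ for every $t$. By definition
$$
\hat V_t(W^j_t)=\hat V_0\big((D_aH^u_t)^{-1}D_aH^u_t W^j_0\big)=\omega(V_0,W^j_0),
$$
so the coordinates of $\hat V_t$ in the dual frame $\{(W^j_t)^*\}$ are constant in $t$. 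The dual frame is obtained from $\{W^j_t\}$ by inverting a Gram-type matrix. To make this precise, fix a smooth local frame of $TM$ near $\gamma$. The $C^r$ field $W^j_t$ has $C^r$ coordinates in that frame, and the dual covectors (viewed as functionals on $E^s$, or extended via the $C^r$ splitting) depend $C^r$ on $t$ by Cramer's rule. Hence $\hat V_t$ is a $C^r$ section of $(E^s)^*$ along $\gamma$.

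\textbf{Step 2: the vector field.} The map $\Phi_x\colon E^u(x)\to E^s(x)^*$, $v\mapsto\omega(v,\cdot)|_{E^s(x)}$, is an isomorphism. This holds because $\omega$ is non-degenerate and both $E^s$ and $E^u$ are Lagrangian, which is standard for Anosov symplectomorphisms since $\omega(v,w)=\omega(Df^nv,Df^nw)\to0$ for $v,w$ in the same bundle. Choose a $C^r$ frame $U^1_t,\dots,U^k_t$ of $E^u(\gamma(t))$, which exists because $E^u$ is $C^r$ and $\gamma$ is smooth. Set $A_{ij}(t)=\omega(U^i_t,W^j_t)$. This matrix is $C^r$, since $\omega$ is smooth and the fields are $C^r$, and it is invertible by the isomorphism above. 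Writing $V_t=\sum_i c_i(t)U^i_t$, the defining relation $\omega(V_t,W^j_t)=\omega(V_0,W^j_0)=:b_j$ becomes $A(t)^{T}c(t)=b$. Therefore $c(t)=(A(t)^T)^{-1}b$ is $C^r$, and so is $V_t$. This computation also gives Step 1 directly, so Step 1 can be folded into it.

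\textbf{Main obstacle.} The only real content lies in Lemma~\ref{lemma}, namely the $C^r$ regularity of $t\mapsto D_aH^u_t$, which we may assume. What remains is bookkeeping: we must make sure that a $C^r$ frame of $E^u$ along $\gamma$ exists and that the matrix $A(t)$ is uniformly invertible. Both follow from the $C^r$ regularity of the distributions and the Lagrangian non-degeneracy. If $r$ is non-integer, one checks that inversion and products preserve $C^r$ (Hölder) regularity on the compact interval $[0,1]$, which is standard.
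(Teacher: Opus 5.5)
Your proposal is correct and follows essentially the paper's route. The paper packages your constant-coordinates observation differently: it pushes an inner product forward by holonomy to get $\langle\cdot,\cdot\rangle_t$ and writes $\hat V_t=\langle W_t,\cdot\rangle_t$, but both arguments rest on Lemma~\ref{lemma} applied to a holonomy-transported basis of $E^s$. Your Step~2, with the $C^r$ frame of $E^u$ and the invertible matrix $A(t)$, just spells out the paper's closing remark that smoothness of $\omega$ gives $C^r$ regularity of $V_t$.
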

\begin{proof}
Fix a scalar product $\langle\cdot,\cdot\rangle_0$ on $E^s(a)$ and push it forward to a scalar product $\langle\cdot,\cdot\rangle_t$ on $E^s(\gamma(t))$, $t\in[0,1]$, via the unstable holonomy $D_pH^u_t$. Pushing forward an orthonormal basis of $E^s(a)$ yields an orthonormal basis of $E^s(\gamma(t))$. By Lemma~\ref{lemma}, the basis vectors are $C^{r}$ with respect to $t$. It follows that $\langle\cdot,\cdot\rangle_t$ is also $C^{r}$ in $t$.

There is a unique $W_0$ such that $\hat V_0=\langle W_0,\cdot\rangle_0$; set $W_t=D_pH^u_t(W_0)$. Then
$$
\hat V_t=\langle W_t,\cdot\rangle_t
.$$
Since both $W_t$ and $\langle\cdot,\cdot\rangle_t$ are $C^{r}$ in $t$, so is $\hat V_t$. Finally, since $\omega$ is smooth we can also conclude that $V_t$ is $C^{r}$.
\end{proof}

Hence we have constructed a $C^{r}$ parallel vector field $V_t$ along $\gamma$. In particular, we have transported $V_0\in E^u(a)$ to $V_1\in E^u(b)$. Note that $V_1$ is independent of the choice of $\gamma$ and hence is canonically defined. In other words, the connection providing the parallel transport is flat. Indeed, the holonomy $DH^u_1\colon E^s(a)\to E^s(b)$ used to define $V_1$ depends only on the homotopy class of $\gamma\subset W^u(a)$, and $W^u(a)$ is simply connected.

\begin{remark} Analogously the symplectic form $\omega$ and the stable holonomies give rise to parallel transport along stable leaves. All this information can be incorporated into a single {\it Kanai connection} which, in fact, has further geometric meaning.
Namely, the {\it Avez metric} is defined by
$$
g(v,w)=\omega(v, Iw)
$$
where $I(w^u+w^s)=w^u-w^s$, $w=w^u+w^s\in E^u\oplus E^s$. It is immediate to check that $g$ is a non-degenerate pseudo-Riemannian metric of signature $(n,n)$ and that the stable and unstable subspaces are Lagrangian.
Then the  Kanai connection is the unique affine connection with respect to which $g$ is parallel and whose torsion vanishes~\cite{K2}.
\end{remark}

\subsection{Non-stationary linearization via Kanai parallelism}

Pick a basis $\{e_1^a, e_2^a, \ldots e_n^a\}$ of $E^u(a)$. Then, via the parallel transport, we have a basis $\{e_1^b, e_2^b, \ldots e_n^b\}$ of $E^u(b)$ for every $b\in W^u(a)$. This family of frames constitutes a {\it parallel framing} of the unstable manifold. Each pair of vector fields $e_i$ and $e_j$ commute. 


We next explain how the parallel framing gives another construction of the non-stationary linearization. The link between the Kanai connection and normal forms is natural here; however, our proofs will not rely on it directly. For each $i$, the equation $\dot\psi_i=e_i$ defines a flow $\psi_i^s$ on $W^u(a)$. These flows commute, so we can define $\cH_a\colon E^u(a)\to W^u(a)$ by setting
$$
\cH_a\left(\sum_i s_ie_i^a\right)=\psi_1^{s_1}\circ \psi_2^{s_2}\circ\ldots\circ \psi_n^{s_n}(a)
.$$
Differentiating with respect to $s_1$ gives $D\cH_a(\frac{\partial}{\partial s_1})=e_1$. Since the order in the above composition does not matter, we can rearrange the factors so that $\psi_i^{s_i}$ is last and similarly obtain $D\cH_a(\frac{\partial}{\partial s_i})=e_i$. Recall that the vector fields $e_i$ are $C^{r}$ by Lemma~\ref{lemma2}. Hence $\cH_a$ is $C^{r+1}$.

In this way we obtain a family of parametrizations $\{\cH_a:a\in M\}$. We have $\cH_a(0)=a$ and $D\cH_a(e_i^a)=e_i^a$. A straightforward diagram chase, using the fact that the dynamics commutes with holonomy, shows that the differential of $\cH_{f^na}^{-1}\circ f^n\circ \cH_a$ is a constant linear map. Hence $\cH_{f^na}^{-1}\circ f^n\circ \cH_a$ itself is linear. Therefore, since $r+1\ge2$, item~$(5)$ of Proposition~\ref{prop_normal_forms} shows that we have recovered the non-stationary linearization from Kanai parallelism.

\subsection{Kanai parallelism for contact flows}
\label{sec:contact-kanai}
Let $\phi^t$ be a contact Anosov flow preserving a contact form $\alpha$.
By invariance, the form $d\alpha$ vanishes on $E^s$ and on $E^u$,
and pairs $E^s$ and $E^u$ non-degenerately. Consequently, the construction
in Subsection~\ref{sec:kanai} applies verbatim, with $d\alpha$ in place of
the symplectic form: unstable holonomy defines parallel transport along
strong unstable leaves, and stable holonomy defines parallel transport
along strong stable leaves. These are the leafwise parallelisms induced
by the Kanai connection~\cite{K1,K2,Ham}. 

\section{Proofs --- Hamenst\"adt's theorem}
Here we give an exposition of Hamenst\"adt's theorem
(Theorem~\ref{thm_contact}) and its Corollary~\ref{cor_contact}.

\subsection{Step 1: the conjugacy is a $C^{1+\eps}$ diffeomorphism}
This step uses the joint non-integrability of the stable and unstable
bundles of a contact flow and the fact that the conjugacy $h$ preserves
the temporal function, which is $C^{1+\eps}$. The inverse function
theorem then gives $C^{1+\eps}$ regularity of both $h$ and $h^{-1}$.
This argument was first discovered by Feldman and Ornstein in dimension
three~\cite{FO} and then improved by Hamenst\"adt~\cite{Ham} and the
current authors~\cite{GRH}. Hamenst\"adt's proof is different, even
though it exploits the same basic mechanism of non-integrability: she
first establishes Lipschitz continuity and then improves it to $C^1$.

\subsection{Step 2: a $C^{1+\eps}$ conjugacy is $C^{2+\eps}$}
By Step~1, the conjugacy $h$ is $C^{1+\eps}$. Since
$Dh(X_1)=X_2$ and $Dh(\ker\alpha_1)=\ker\alpha_2$, we have
$h^*\alpha_2=\alpha_1$ and hence $h^*d\alpha_2=d\alpha_1$.

Fix $a$ and a basis $\{e_1^a,\ldots,e_n^a\}$ of $E_1^u(a)$, and extend
it to a parallel frame $\{e_1,\ldots,e_n\}$ on $W_1^u(a)$. Push the
basis at $a$ forward by $Dh_a$ and extend it to a parallel frame
$\{\bar e_1,\ldots,\bar e_n\}$ on $W_2^u(h(a))$. The contact Kanai
parallelism from Subsection~\ref{sec:contact-kanai} depends only on the
holonomies and on $d\alpha_i$. Because $h$ conjugates the holonomies and
$h^*d\alpha_2=d\alpha_1$, it follows that
$$
Dh(e_i)=\bar e_i,\qquad i=1,\ldots,n.
$$
Recall that parallel framing is $C^{1+\eps}$ regular. Hence, with
respect to a smooth coordinate chart, the above equation becomes a
system of linear equations on partial derivatives
$\partial h_j/\partial x_i$ along the
unstable leaves with $C^{1+\eps}$ coefficients. More explicitly, if
$E$ and $\bar E$ denote the frame matrices in these coordinates, then
$$
Dh\cdot E=\bar E\circ h,
\qquad\text{and hence}\qquad
Dh=(\bar E\circ h)E^{-1}.
$$
Since $E$ and $\bar E$ are $C^{1+\eps}$ and $h$ is $C^1$, the right
hand side is $C^1$. Hence $h$ is $C^2$ along unstable leaves. Now
differentiate the last formula. The derivative of $Dh$ is a sum of
terms involving $D\bar E\circ h$, $Dh$, $E^{-1}$ and $D(E^{-1})$.
At this point, along each unstable leaf, $h$ and $Dh$ are Lipschitz,
while $D\bar E$ and
$D(E^{-1})$ are $C^\eps$. Therefore $D(Dh)$ is $C^\eps$. Thus $h$ is
$C^{2+\eps}$ along the unstable leaf $W_1^u(a)$. Since the choice of $a$
was arbitrary, we obtain that $h$ is $C^{2+\eps}$ along the unstable
leaves, in fact, uniformly so, since all the considerations were
uniform with respect to the choice of $a$. Then the symmetric argument
gives $C^{2+\eps}$ smoothness along the strong stable foliation.

Finally, $h$ is smooth along flow orbits because the conjugacy is
time-preserving. Journ\'e's lemma~\cite{J}, applied first inside each
weak stable leaf to the strong stable and flow foliations, gives
$C^{2+\eps}$ regularity along weak stable leaves. Applying it once more
to the weak stable and strong unstable foliations gives global
$C^{2+\eps}$ regularity. This completes the proof of
Theorem~\ref{thm_contact}.

\begin{proof}[Proof of Corollary~\ref{cor_contact}]
By compactness, $2$-pinching implies $r$-pinching for some $r<2$.
The discussion in Subsection~\ref{sec:bunching} then gives
$C^{1+\eps}$ stable and unstable distributions for some $\eps>0$.
Theorem~\ref{thm_contact} makes the conjugacy $C^{2+\eps}$. Uniqueness
of the non-stationary linearizations in
Proposition~\ref{prop_normal_forms}\footnote{To be precise, we use the completely analogous flow version of Proposition~\ref{prop_normal_forms}.} now implies that the conjugacy is
$C^\infty$ along the strong stable and unstable leaves. It is smooth
along flow orbits, and the two applications of Journ\'e's lemma used
above give global $C^\infty$ regularity.
\end{proof}

\section{Proofs --- smooth conjugacy for Anosov symplectomorphisms}
Here we give proofs of smooth conjugacy results for Anosov symplectomorphisms assuming the results on symplecto-rigidity, which we will derive in the last section of this paper.

\begin{proof}[Proof of Theorem~\ref{thm_symp}]
Denote the conjugacy by $h$ ---  $h\circ f_1=f_2\circ h$. Then
$h^*\omega$ is a continuous $f_1$-invariant symplectic form. Since $h$
is homotopic to the identity, $h^*\omega$ belongs to the same cohomology class as
$\omega$. Hence, by symplecto-rigidity of $f_1$, $h^*\omega=\omega$.

Just as in the proof of Theorem~\ref{thm_contact}, on corresponding unstable leaves we can choose Kanai-parallel frames $\{e_1,\ldots,e_n\}$ and $\{\bar e_1,\ldots,\bar e_n\}$ such that
$$
Dh(e_i)=\bar e_i, \,\,\,i=1,\ldots n.
$$
The same construction applies on stable leaves. The exact same argument as in the proof of Theorem~\ref{thm_contact} then shows that $h$ is $C^{2+\eps}$.
\end{proof}

\begin{proof}[Proof of Corollary~\ref{cor_symp}]
By compactness, the $2$-pinching assumption implies $r$-pinching for
some $r<2$. Hence, by the discussion in Section~2, the stable and
unstable distributions of $f_1$ and $f_2$ are $C^{1+\eps}$ for some
$\eps>0$. Applying Theorem~\ref{thm_symp}, we obtain that the conjugacy
$h$ is $C^{2+\eps}$.

Let $\cH^i_x\colon E^u_i(x)\to W^u_i(x)$, $i=1,2$, be the
non-stationary linearizations from Proposition~\ref{prop_normal_forms}.
Since $h$ is $C^2$ and conjugates the dynamics, the maps
$$
h\circ \cH^1_x\circ (Dh_x|_{E^u_1(x)})^{-1}\colon
E^u_2(hx)\to W^u_2(hx)
$$
form a $C^2$ non-stationary linearization for $f_2$ satisfying the
normalization properties $(2)$--$(4)$. By uniqueness in
Proposition~\ref{prop_normal_forms}, this linearization coincides with
$\cH^2_{hx}$. Thus
$$
h|_{W^u_1(x)}=\cH^2_{hx}\circ Dh_x|_{E^u_1(x)}\circ(\cH^1_x)^{-1}.
$$
Therefore $h$ is $C^\infty$ along unstable leaves. The same argument for
stable leaves gives $C^\infty$ regularity along stable leaves. Journ\'e's
lemma now implies that $h$ is $C^\infty$.

\end{proof}

\begin{proof}[Proof of Corollary~\ref{cor_periodic_data}]
After conjugating by the toral automorphism induced by $h$, we may assume that $h$ is homotopic to the identity (and replace $\omega$ by the corresponding constant symplectic form). By~\cite[Proposition~1.6]{DWG}, $h$ is $C^{1+\nu}$ for some $\nu>0$. Since $h^*\omega$ is H\"older and $L$-invariant, symplecto-rigidity of $L$ gives $h^*\omega=\omega$. {Thus $h$ is symplectic, so symplecto-rigidity transfers from $L$ to $f$; also, the $C^1$ conjugacy transfers the $2$-pinching of $L$ to $f$.} Then Corollary~\ref{cor_symp} implies that $h$ is $C^\infty$.
\end{proof}

\begin{proof}[Proof of Corollary~\ref{cor_real_spectrum}]
The global version of Gogolev's rigidity theorem~\cite[Theorem~1.4]{DWG} (see also~\cite[Theorem~A]{Gog}) gives a conjugacy $h$, homotopic to the identity, of class $C^{1+\nu_0}$, where $\nu_0>0$ is uniform on a sufficiently small neighborhood $\U$ of $L$. The proof of Theorem~\ref{thm_dim4} allows us to shrink $\U$ further and choose an exponent $\alpha\in(0,\nu_0]$ such that every $f\in\U$ is $C^\alpha$-symplecto-rigid and $2$-pinched. Then $h^*\omega$ is $C^\alpha$, $f$-invariant, and, since $h$ is homotopic to the identity, cohomologous to $\omega$. Thus the $C^\alpha$-symplecto-rigidity of $f$ gives $h^*\omega=\omega$, so $h$ is symplectic. The $C^1$ conjugacy also transfers the $2$-pinching of $f$ to $g$. The bootstrap argument in the proof of Corollary~\ref{cor_symp} now applies verbatim and implies that $h$ is $C^\infty$.
\end{proof}

\begin{proof}[{Proof of Theorem~\ref{cor_product}}]
We can apply~\cite[Theorem~C]{Gog} to get a $C^{1+\nu}$ diffeomorphism $H$, homotopic to the identity, such that
$$
H\circ f\circ H^{-1}=L_\phi,
\qquad
L_\phi(x,y)=(Ax,By+\phi(x)).
$$
Let $\omega_0$ denote the standard product symplectic form and set $\omega_1=(H^{-1})^*\omega_0$. Then $\omega_1$ is a closed, non-degenerate, $L_\phi$-invariant $C^\nu$ form cohomologous to $\omega_0$.

Consider the product splitting $T_p\mathbb T^4=\mathbb R_x^2\oplus\mathbb R_y^2$, $p=(x,y)$. Since $DL_\phi|_{\mathbb R_y^2}=B$, we may write $(\omega_1)_p|_{\mathbb R_y^2}=a(p)\omega_B$, where $\omega_B$ is the standard area form on $\mathbb R_y^2$. By invariance of both forms, $a\circ L_\phi=a$, and transitivity of $L_\phi$ implies that $a$ is constant; integration over $\{x\}\times\mathbb T^2$ gives $a=1$. Thus $\mathbb R_y^2$ is a symplectic subspace. Its symplectic orthogonal is an invariant complement to $\mathbb R_y^2$, hence it is the graph of maps $P_p\colon\mathbb R_x^2\to\mathbb R_y^2$. Invariance gives
\begin{equation}
P_{L_\phi(p)}A=d\phi_x+BP_p.
\label{eq_graph_transform}
\end{equation}

\begin{lemma}
\label{lem:product_graph_base}
The map $P_p=P_{(x,y)}$ depends only on $x$. Moreover, $\omega_1$
is invariant under translations in the $y$-variable.
\end{lemma}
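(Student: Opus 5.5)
Fix $x$ and compare $P_{(x,y)}$ with $P_{(x,y')}$. Set $D_p=P_{(x,y)}-P_{(x,y')}$ for $p=(x,y)$, $p'=(x,y')$. Since $L_\phi$ preserves the fibers $\{x\}\times\T^2$ and maps $(x,y),(x,y')$ to points with the same first coordinate $Ax$, subtracting~\eqref{eq_graph_transform} at $p$ and $p'$ cancels the inhomogeneous term $d\phi_x$. This gives the homogeneous relation
$$
\Delta_{n+1}A=B\Delta_n,\qquad \Delta_n=P_{L_\phi^n(p)}-P_{L_\phi^n(p')}.
$$
Hence $\Delta_0=B^{-n}\Delta_nA^{n}$ and also $\Delta_0=B^{n}\Delta_{-n}A^{-n}$. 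The family $P$ is continuous on a compact manifold, so $\|\Delta_n\|$ is uniformly bounded. We then decompose $\Delta_0\colon\R^2_x\to\R^2_y$ into components with respect to the eigenbases of $A$ (eigenvalues $\lambda^{\pm1}$) and $B$ (eigenvalues $\mu^{\pm1}$). Each matrix entry picks up a factor of the form $\mu^{\pm n}\lambda^{\pm n}$, and we want some iteration direction in which it decays.

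Under~\eqref{eq_pinch} the four products are $\mu^{-1}\lambda^{-1}$, $\mu\lambda$, $\mu^{-1}\lambda$ and $\mu\lambda^{-1}$. The first two are not equal to $1$. The other two are not equal to $1$ either, because $\lambda<\mu$ strictly. So for each entry, iterating forward or backward contracts it to $0$, which forces $\Delta_0=0$. Thus $P_p$ depends only on $x$. This is the step where pinching, or at least $\lambda\ne\mu$, is genuinely used. It is the main obstacle only in the sense of bookkeeping the four eigen-directions; bounded continuity of $P$ makes the argument clean.

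For translation invariance in $y$, write $\omega_1$ in the splitting $\R^2_x\oplus\mathrm{graph}$. We already know the $\R^2_y$-part equals $\omega_B$ and the two summands are $\omega_1$-orthogonal. It remains to show that the restriction of $\omega_1$ to the graph of $P_x$, pulled back to $\R^2_x$, equals $c(x,y)\,\omega_A$ with $c$ independent of $y$. Invariance gives $c\circ L_\phi=c$, since $L_\phi$ acts on the graph by $A$ after projection and $\det A=1$. Transitivity of $L_\phi$ then makes $c$ constant. So $\omega_1$ is determined by $P_x$ and constants, all independent of $y$, and translations $(x,y)\mapsto(x,y+t)$ preserve it, because they preserve $\R^2_y$, map the graph of $P_x$ at $(x,y)$ to the graph of $P_x$ at $(x,y+t)$, and commute with the projections.
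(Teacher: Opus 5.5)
Your proof is correct and essentially matches the paper's argument. You subtract the graph-transform identities at $(x,y)$ and $(x,y')$ to kill $d\phi_x$, and bound $B^n\Delta_0A^{-n}$ entrywise in eigenbases using that none of $\mu^{\pm1}\lambda^{\pm1}$ has modulus $1$. You then show that the density of $\omega_1$ on $\mathrm{graph}(P_x)$ is $L_\phi$-invariant, hence constant by transitivity, which gives $\omega_1((u,v),(w,z))=b\,\omega_A(u,w)+\omega_B(v-P_xu,z-P_xw)$, independent of $y$.

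Two small slips. The splitting you mean is $\R^2_y\oplus\mathrm{graph}(P_x)$, not $\R^2_x\oplus\mathrm{graph}$. Also, $L_\phi$ does not preserve the fibers $\{x\}\times\T^2$; it sends the fiber over $x$ to the fiber over $Ax$, which is all the cancellation of $d\phi_x$ needs.
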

\begin{proof}
Fix $x\in\mathbb T^2$ and $y,y'\in\mathbb T^2$. Iterating the
difference of the two graph-transform identities gives, for every
$n\in\mathbb Z$,
$$
P_{L_\phi^n(x,y)}-P_{L_\phi^n(x,y')}
=B^n\bigl(P_{(x,y)}-P_{(x,y')}\bigr)A^{-n}.
$$
The left-hand side is bounded. In eigenbases of $A$ and $B$, each
coefficient on the right-hand side is multiplied by
$(\mu^{\pm1}/\lambda^{\pm1})^n$. The pinching inequalities imply that
none of these four ratios has absolute value $1$. Boundedness for both
positive and negative $n$ therefore forces every coefficient of
$P_{(x,y)}-P_{(x,y')}$ to vanish. Thus $P_{(x,y)}=P_{(x,y')}$. 

It remains to prove the assertion about $\omega_1$. Write
$P_{(x,y)}=P_x$, and let $\omega_A$ be the standard area form on
$\mathbb R_x^2$. Since the graph of $P_x$ is the $\omega_1$-orthogonal
complement of $\mathbb R_y^2$, there is a function $b$ such that
$$
\omega_{1,(x,y)}(u+P_xu,w+P_xw)=b(x,y)\omega_A(u,w).
$$
The graph-transform identity gives
$$
DL_\phi(u+P_xu)=Au+P_{Ax}Au.
$$
Since $\omega_1$ is $L_\phi$-invariant and $A$ preserves $\omega_A$,
it follows that $b\circ L_\phi=b$. Transitivity of $L_\phi$ therefore
implies that $b$ is constant. Finally, decomposing
$$
(u,v)=(u,P_xu)+(0,v-P_xu)
$$
and using that the two summands belong to $\omega_1$-orthogonal
subspaces, we obtain
$$
\omega_{1,(x,y)}\bigl((u,v),(w,z)\bigr)
=b\omega_A(u,w)+\omega_B(v-P_xu,z-P_xw).
$$
The right-hand side is independent of $y$, which proves that
$\omega_1$ is invariant under translations in the $y$-variable.
\end{proof}

By Lemma~\ref{lem:product_graph_base}, we may write $P_p=P_x$. The family of maps
$P_x\colon T_x\mathbb T^2\to\mathbb R_y^2$ can be considered as a $C^\nu$ one-form $P$
on $\T^2$ with values in $\mathbb R^2$. The mixed component
of $\omega_1$ is determined by
\begin{equation}
\label{eq:omega}
\omega_1(u,v)=-\omega_B(P_xu,v),
\qquad u\in\mathbb R_x^2,\quad v\in\mathbb R_y^2.
\end{equation}

\begin{lemma}
\label{lem:product_graph_exact}
There exists a $C^{1+\nu}$ map
$\Psi=(\psi_1,\psi_2)\colon\mathbb T_x^2\to\mathbb R^2$ such that
$P=d\Psi$.
\end{lemma}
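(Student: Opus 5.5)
The plan is to use closedness of $\omega_1$, together with the explicit formula for $\omega_1$ from the proof of Lemma~\ref{lem:product_graph_base}. This should show that each component of $P$ is a closed Hölder one-form on $\T^2_x$. Hodge theory then makes each component exact up to a constant, and the graph-transform identity~\eqref{eq_graph_transform} kills the constants.

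\textbf{Step 1: the components of $P$ are weakly closed.} Write $P=(P^1,P^2)$ with $P^i$ real-valued $C^\nu$ one-forms on $\T^2_x$. Since $\omega_B=dy_1\wedge dy_2$, we have $\omega_B(P_xu,z)=P^1(u)z_2-P^2(u)z_1$. Expanding the formula from Lemma~\ref{lem:product_graph_base} then gives
$$
\omega_1=\bigl(b+\omega_B(P_x\partial_{x_1},P_x\partial_{x_2})\bigr)\,dx_1\wedge dx_2-P^1\wedge dy_2+P^2\wedge dy_1+dy_1\wedge dy_2 .
$$
All coefficients depend on $x$ only. The form $\omega_1=(H^{-1})^*\omega_0$ is the pullback of a smooth closed form by a $C^{1+\nu}$ diffeomorphism, so it is closed in the sense of distributions. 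One can check this by approximating $H^{-1}$ in $C^1$ by smooth maps, since pullbacks of closed forms by smooth maps are closed. Taking $d$ distributionally, the $dx_1\wedge dx_2$ coefficient is a function of $x$ alone and contributes nothing. The remaining terms give
$$
0=d\omega_1=-dP^1\wedge dy_2+dP^2\wedge dy_1 .
$$
Hence $dP^1=dP^2=0$ as distributions on $\T^2$.

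\textbf{Step 2: Hodge decomposition with Schauder regularity.} Take a weakly closed $C^\nu$ one-form $P^i$ on $\T^2$. Write $P^i=c_i+d\psi_i$, where $c_i$ is the constant (harmonic) part, i.e.\ the average of $P^i$, and $\psi_i$ solves $\Delta\psi_i=d^*P^i$. This is easy to see in Fourier coefficients: closedness forces each nonzero mode $\hat P^i_k$ to be parallel to $k$. Elliptic Schauder estimates, using $P^i\in C^\nu$, give $\psi_i\in C^{1+\nu}$. For non-integer $\nu$ this is the standard estimate; if needed, first decrease $\nu$ slightly. So $P=c+d\Psi$, where $c$ is a constant $2\times 2$ matrix (a constant linear map $\R^2_x\to\R^2_y$).

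\textbf{Step 3: the constant vanishes.} Substitute into~\eqref{eq_graph_transform} and average over $x\in\T^2$ with respect to Lebesgue measure. Lebesgue measure is $A$-invariant, so the averages of $d\Psi_{Ax}A$ and $B\,d\Psi_x$ vanish. Moreover $\phi$ lifts to a periodic map $\T^2\to\R^2$, so the average of $d\phi$ is also $0$. The reason is that $L_\phi=HfH^{-1}$ is homotopic to $L$, because $H$ is homotopic to the identity and $f$ is $C^1$-close to $L$; this forces $\phi$ to be null-homotopic. Averaging therefore leaves the matrix equation
$$
cA=Bc .
$$
In eigenbases of $A$ and $B$ this says $c_{jk}\lambda^{\pm1}=\mu^{\pm1}c_{jk}$ entrywise. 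By the pinching~\eqref{eq_pinch}, no ratio of the eigenvalues involved equals $1$, so $c=0$. This is the same argument as in Lemma~\ref{lem:product_graph_base}. Hence $P=d\Psi$ with $\Psi\in C^{1+\nu}$.

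\textbf{Main obstacle.} I expect the delicate point to be Step~1: justifying closedness of the merely Hölder form $\omega_1$ and extracting $dP^i=0$ distributionally. I would handle this by testing $\omega_1$ against $d$ of smooth compactly supported one-forms and passing to the limit along smooth $C^1$-approximations of $H^{-1}$. A secondary point is the claim that $\phi$ is null-homotopic. If instead $\phi$ had a nontrivial homotopy class $D$, Step~3 would give $cA=D+Bc$. I would then need to check directly that $L_\phi$ being homotopic to $L$ forces $D=0$, which follows from comparing the induced maps on $H_1(\T^4)$.
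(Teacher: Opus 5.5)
Your proposal is correct, but it handles the constant (harmonic) part of $P$ differently from the paper. The closedness step matches the paper. Weak closedness of $\omega_1$, together with the $y$-independence from Lemma~\ref{lem:product_graph_base}, gives $dp_1=dp_2=0$ weakly.

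The paper then shows directly that the periods of $p_j$ vanish, using only topology. Since $[\omega_1]=[\omega_0]$ and $\omega_0$ has no mixed $dx\wedge dy$ component, integrating $\omega_1-\omega_0$ over the product tori $\gamma\times y_j$ gives $\int_\gamma p_j=0$. The paper then defines $\psi_j$ by path integration.

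You instead write $P=c+d\Psi$ via Fourier/Hodge and kill the constant $c$ dynamically. Averaging the graph-transform identity gives $cA=Bc$, which uses null-homotopy of $\phi$ (from $L_\phi\simeq L$). The non-resonance $\lambda^{\pm1}\neq\mu^{\pm1}$ then forces $c=0$.

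The paper's route is shorter: it needs neither the homotopy class of $\phi$ nor any eigenvalue condition. Yours replaces the cohomological input with a homotopy input plus non-resonance. In exchange it explains dynamically why no mixed class can occur: $c\mapsto BcA^{-1}$ has no eigenvalue $1$.

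Your Fourier step also handles cleanly the passage from weak closedness to path-independence, which the paper leaves implicit. The Schauder estimates (and the possible loss in $\nu$) are unnecessary there: a distribution on $\T^2$ whose gradient is a $C^\nu$ function is automatically $C^{1+\nu}$.
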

\begin{proof}
We write $P=(p_1,p_2)$, where $p_1$ and $p_2$ are 1-forms.
We first show that $p_1$ is closed in the weak sense. Let $X$ and $Y$
be constant vector fields on $\mathbb T^2$. By~\eqref{eq:omega}
$\omega_1(X,\partial_{y_2})=-p_1(X)$, the identity $d\omega_1=0$
gives, in distributional sense
$$
\begin{aligned}
0=d\omega_1(X,Y,\partial_{y_2})
&=X\bigl[\omega_1(Y,\partial_{y_2})\bigr]
-Y\bigl[\omega_1(X,\partial_{y_2})\bigr]
+\partial_{y_2}\bigl[\omega_1(X,Y)\bigr]\\
&=-X[p_1(Y)]+Y[p_1(X)]
+\partial_{y_2}\bigl[\omega_1(X,Y)\bigr]\\
&=-dp_1(X,Y)+\partial_{y_2}\bigl[\omega_1(X,Y)\bigr].
\end{aligned}
$$
By Lemma~\ref{lem:product_graph_base}, $\omega_1$ is independent of
$y$, so the last term vanishes as a distribution. Hence
$dp_1(X,Y)=0$. Since $X$ and $Y$ were arbitrary, $dp_1=0$ in the weak
sense.

We next check that all periods of $p_1$ vanish. Let $\gamma$ be a
closed loop in $\mathbb T_x^2$ and let
$y_2(t)=(0,t)\pmod{\mathbb Z^2}$ be the $y_2$-coordinate loop. Since
$[\omega_1]=[\omega_0]$ and $\omega_0$ has no mixed component,
$$
0=\int_{\gamma\times y_2}(\omega_1-\omega_0)
=-\int_\gamma p_1.
$$
Thus $p_1$ has zero periods. Fix $x_0\in\mathbb T^2$. For
$x\in\mathbb T^2$,
choose a path $\sigma$ from $x_0$ to $x$ and set
$$
\psi_1(x)=\int_\sigma p_1.
$$
Because $p_1$ is closed and has zero periods, this definition is
independent of the choice of $\sigma$. Moreover,
$d\psi_1=p_1$, and the $C^\nu$ regularity of $p_1$ gives
$\psi_1\in C^{1+\nu}$. The argument for $p_2$ is analogous: it is weakly closed, and using
$\omega_1(X,\partial_{y_1})=p_2(X)$ together with the $y_1$-coordinate loop,
we obtain a $C^{1+\nu}$ function $\psi_2$ with $d\psi_2=p_2$. Therefore
$\Psi=(\psi_1,\psi_2)$ satisfies $P=d\Psi$.
\end{proof}

By Lemma~\ref{lem:product_graph_exact}, $P=d\Psi$. Substituting
in~\eqref{eq_graph_transform} yields
$$
d\Psi_{Ax}\circ A=B\circ d\Psi_x+d\phi_x.
$$
Here the chain rule gives
$$
d(\Psi\circ A)_x=d\Psi_{Ax}\circ A,
\qquad
d(B\circ\Psi)_x=B\circ d\Psi_x,
$$
because $A$ and $B$ are fixed linear maps. Thus
$$
d(\Psi\circ A-B\circ\Psi-\phi)=0,
$$
so there is a constant $c\in\mathbb R^2$ such that
$$
\Psi(Ax)-B\Psi(x)-\phi(x)=c.
$$
Since $1$ is not an eigenvalue of $B$, the map $I-B$ is invertible.
Replacing $\Psi$ by $\Psi-(I-B)^{-1}c$ therefore gives
$$
\Psi(Ax)=B\Psi(x)+\phi(x).
$$
Consequently, $S(x,y)=(x,y+\Psi(x))$ satisfies
$S\circ L=L_\phi\circ S$. Thus $h=S^{-1}\circ H$ is a $C^{1+\nu}$
diffeomorphism, homotopic to the identity, which conjugates $f$ to $L$.

After shrinking the neighborhood of $L$, the map $f$ is $2$-pinched and is symplecto-rigid by Addendum~\ref{add_dim4}; the automorphism $L$ is symplecto-rigid by Lemma~\ref{lemma_toral_forms}. Corollary~\ref{cor_symp} now implies that $h$ is $C^\infty$.
\end{proof}

\section{Proofs --- symplecto-rigidity}

We prove Theorem~\ref{thm_dim4}.

Let $f\colon(\T^4,\omega)\to (\T^4,\omega)$ be a symplectic perturbation of a symplectic automorphism $L\colon(\T^4,\omega)\to (\T^4,\omega)$. First note that $\omega$ has constant coefficients. Indeed, let $\omega_0$ be the unique constant-coefficient representative of the cohomology class $[\omega]$. Since $L^*[\omega]=[\omega]$, the constant-coefficient forms $L^*\omega_0$ and $\omega_0$ are cohomologous and hence equal. Thus $\omega-\omega_0$ is an invariant exact $2$-form, so Lemma~\ref{lemma_toral_forms} gives $\omega=\omega_0$.

Let $\omega'$ be an invariant $C^\alpha$ regular symplectic form in the same cohomology class as $\omega$. Then $\eta=\omega-\omega'=d\alpha$ is an exact $C^\alpha$ invariant 2-form. Then $\omega\wedge\eta$ is a 4-form which must be a constant multiple of $vol$ by ergodicity. On the other hand
$$
\int_M \omega\wedge\eta=\int_M d(\omega\wedge\alpha)=0.
$$
Hence $\omega\wedge\eta=0$, so $\eta$ is so called {\it primitive form}. Similarly we deduce that $\eta\wedge\eta=0$.

Since $\omega$ is non-degenerate we can represent $\eta$ via an endomorphism $B\colon TM\to TM$:
$$
\eta(v,u)=\omega(Bv,u)
$$
Invariance of $\eta$ and $\omega$ immediately implies that $B$ commutes with dynamics --- $B\circ Df=Df\circ B$, which in turn implies invariance of the stable and unstable subbundles --- $B(E^s)\subset E^s$, $B(E^u)\subset E^u$ and we can split $B=B^s\oplus B^u\colon E^s\oplus E^u\to E^s\oplus E^u$.

By invariance and uniform contraction, both $\eta$ and $\omega$ vanish on $E^s\wedge E^s$ and on $E^u\wedge E^u$. At any point $p$, choose bases $\{e_1,e_2\}$ of $E^s(p)$ and $\{f_1,f_2\}$ of $E^u(p)$ that are dual with respect to $\omega$. Then $\omega$ takes the standard form
$$
\omega=
e^1\wedge f^1+e^2\wedge f^2
.$$
At $p$, we can write $\eta$ as
$$
\eta=a e^1\wedge f^1 +b e^2\wedge f^1+ c e^1\wedge f^2+d e^2\wedge f^2
,$$
where $B^s_p=\begin{psmallmatrix}
    a & b\\
    c & d
\end{psmallmatrix}$ relative to $\{e_1,e_2\}$. The skew-symmetry of $\eta$ and $\omega$ gives $B^u_p=(B^s_p)^t$ relative to $\{f_1,f_2\}$. Finally, $\omega\wedge\eta=0$ yields $\operatorname{tr}(B_p^s)=a+d=0$, while $\eta\wedge\eta=0$ yields $\det(B_p^s)=ad-bc=0$. Hence $B^s_p$ is nilpotent.

Now given a periodic point $p=f^n(p)$ we have that $B^s_p$ commutes with $D_pf^n|_{E^s(p)}$ which immediately implies that either $D_pf^n|_{E^s(p)}$ has equal real eigenvalues or $B^s_p$ vanishes.

\begin{lemma}\label{lemma_real} If $L$ has real stable eigenvalues of distinct moduli then $\eta\equiv 0$.
\end{lemma}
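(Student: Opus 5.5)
The plan is to use the dichotomy just established at periodic points: if $p=f^n(p)$, then either $D_pf^n|_{E^s(p)}$ has equal real eigenvalues, or $B^s_p=0$. I will show that for $f$ sufficiently $C^1$ close to $L$, the first alternative never occurs. Then $B^s$ vanishes at every periodic point. Since $\eta$ is continuous, so is $B$, and periodic points are dense, so $B^s\equiv0$. Because $B^u_p=(B^s_p)^t$, also $B^u\equiv 0$. Hence $B=0$ and $\eta\equiv0$.

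The key step is to rule out equal eigenvalues of $D_pf^n|_{E^s(p)}$. Let the stable eigenvalues of $L$ be real with $|\lambda_1|<|\lambda_2|<1$. Then $E^s_L$ splits into two $L$-invariant lines with a uniform gap in contraction rates. This dominated splitting is $C^1$-robust. For $f$ that is $C^1$ close to $L$, the bundle $E^s_f$ has a continuous $Df$-invariant splitting $E^s=E^{ss}\oplus E^{ws}$ into line fields, with cone fields giving uniform constants $\theta<1$ and $C>0$ such that
$$\|Df^n|_{E^{ss}(x)}\|\le C\theta^n\, m(Df^n|_{E^{ws}(x)})$$
for all $x$ and $n\ge1$. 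At a periodic point $p$ of period $n$, the map $D_pf^n|_{E^s(p)}$ preserves both lines, so its eigenvalues are $\sigma_1,\sigma_2$ with $|\sigma_1|\le C\theta^n|\sigma_2|$. Replacing $n$ by $kn$ and letting $k\to\infty$ gives $|\sigma_1|<|\sigma_2|$. So the eigenvalues are real but of distinct moduli, and in particular not equal.

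It remains to check the commutation step in this setting. The nilpotent $B^s_p$ commutes with the diagonalizable $D_pf^n|_{E^s(p)}$, which has distinct eigenvalues. Therefore $B^s_p$ is diagonal in the same eigenbasis. A nilpotent diagonal matrix is zero, so $B^s_p=0$, which confirms the dichotomy.

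The main obstacle is the robustness of the dominated splitting inside $E^s$, and keeping the neighborhood of $L$ fixed as the one on which Theorem~\ref{thm_dim4} is stated. I would handle this with the standard invariant cone argument applied to $Df$ restricted to $E^s_f$, which is $C^0$-close to $E^s_L$. An alternative route is Hirsch--Pugh--Shub persistence of the partially hyperbolic splitting $E^{ss}\oplus E^{ws}\oplus E^u$.

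Note that this argument only uses continuity of $\eta$. The Hölder exponent $\alpha$ therefore plays no role here and presumably matters only in the complex-eigenvalue case.
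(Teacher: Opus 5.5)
Your proposal is correct and follows the paper's proof. In both, robustness of the dominated splitting $E^s=E^s_1\oplus E^s_2$ forces each periodic stable return map to have two real eigenvalues of distinct moduli, so the commuting nilpotent $B^s_p$ vanishes; density of periodic points, continuity of $B^s$, and $B^u=(B^s)^t$ then give $\eta\equiv0$. Your extra detail (upgrading $|\sigma_1|\le C\theta^n|\sigma_2|$ to a strict inequality via iterates) fills in a step the paper leaves implicit. Your remark that only continuity of $\eta$ is used matches how the paper reuses this lemma in the proof of Addendum~\ref{add_dim4}.
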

\begin{proof}
Let $\lambda_1,\lambda_2$ be the stable eigenvalues of $L$ and assume
$|\lambda_1|<|\lambda_2|<1$. Then the splitting of $E^s_L$ into the
corresponding one-dimensional eigenspaces is dominated. Hence, for any
$f$ sufficiently $C^1$ close to $L$, we have a dominated splitting
$$
E^s=E^s_1\oplus E^s_2
$$
with one-dimensional subbundles.

By the preceding observation, at every periodic point $p=f^n(p)$ the
nilpotent operator $B^s_p$ commutes with a linear map with two distinct
real eigenvalues. Therefore $B^s_p=0$. Periodic points are dense and
$B^s$ is continuous, hence $B^s\equiv0$. Since $B^u=(B^s)^t$ in the
dual bases above, we also have $B^u\equiv0$. Thus $B\equiv0$ and
$\eta\equiv0$.
\end{proof}

Let $\ell^s\subset E^s$ be the kernel of $B^s$ and, similarly, let
$\ell^u\subset E^u$ be the kernel of $B^u$. They are invariant
distributions of dimension $\ge 1$, note that $\dim \ell^s(x)=\dim
\ell^u(x)$, $\forall x\in \T^4$. The set $K$ where this distribution
coincides with $E^s$ is closed and invariant, this is precisely the set
where $\eta$ vanishes. If $K=\T^4$ then $\eta\equiv 0$ and we are done.
Note that if $K\neq \T^4$ then $K$ has empty interior because it is
invariant. In particular, we already have that all periodic points in
$\T^4\backslash K$ have repeated real eigenvalues.

\begin{lemma} If $K\neq \T^4$ then $K=\varnothing$.
\end{lemma}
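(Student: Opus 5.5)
The plan is to argue by contradiction. Assume $\varnothing\neq K\neq\T^4$ and set $U=\T^4\setminus K$. Then $U$ is open, invariant and dense. On $U$ the operator $B^s$ is nilpotent and nonzero, so it has rank one. Hence $\ell^s=\ker B^s=\operatorname{Im}B^s$ is a continuous $Df$-invariant line field on $U$. At each $x\in U$ we can write $B^s_x=v_x\otimes\xi_x$ with $v_x\in\ell^s(x)$ and $\xi_x\in (E^s(x))^*$ satisfying $\ker\xi_x=\ell^s(x)$. The relation $B\circ Df=Df\circ B$ then gives
$$
\|B^s_{f^nx}\|\asymp e^{S_nr(x)}\,\|B^s_x\|,\qquad r(x)=\log\|Df|_{\ell^s(x)}\|-\log\|Df|_{E^s(x)/\ell^s(x)}\|,
$$
with multiplicative constants depending only on the choice of norms. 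The first thing I would record is that $S_nr$ vanishes on every periodic point in $U$. Indeed, we already know that $D_pf^n|_{E^s(p)}$ has a repeated real eigenvalue there, and $\ell^s(p)$ is the eigenline. So the rates on $\ell^s$ and on the quotient both equal the modulus of that repeated eigenvalue.

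Next I want to turn this into a uniform statement. The first step is to show that $\ell^s$ is Hölder on $U$ with locally uniform constants. This comes from $\eta$ being $C^\alpha$: the kernel of a rank-one nilpotent $2\times 2$ matrix depends smoothly on the matrix away from $0$. Consequently $r$ is Hölder on sets $\{\|B^s\|\ge\delta\}$. I would then use the Anosov closing lemma together with the vanishing of periodic sums. The aim is a Livshits-type bound: whenever an orbit segment $x,\dots,f^nx$ stays in $\{\|B^s\|\ge\delta\}$ and returns close to $x$, we get $|S_nr(x)|\le C(\delta)$. The point of this bound is that $\log\|B^s\|$ can change only by a bounded amount along such segments.

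The contradiction then comes from choosing a suitable periodic orbit. Fix $x_0\in K$ and $y_0\in U$ with $\|B^s_{y_0}\|=2\delta$. By specification, for every $N$ there is a periodic point $q\in U$ whose orbit first passes near $y_0$ and then stays within $e^{-N}$ of the orbit of $x_0$ for $N$ steps. Since $\eta$ is $C^\alpha$ and vanishes on $K$, we have $\|B^s_{f^kq}\|\le Ce^{-\alpha N}$ along the middle portion of that orbit. On the other hand, $\|B^s_q\|\ge\delta$, and the periodic sums of $r$ vanish. So $\log\|B^s\|$ would have to drop by roughly $\alpha N$ and then recover along the same periodic orbit. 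I would try to rule this out by comparing the cocycle of $f$ with that of $L$: since $f$ is $C^1$ close to $L$, the rates on $\ell^s$ and on the quotient differ from each other by at most a small $\epsilon$ per step, and $\epsilon$ can be taken small compared with the Hölder exponent $\alpha$. This is where the smallness of $\alpha$ in Theorem~\ref{thm_dim4} and the $C^1$-closeness should enter. A drop of size $\alpha N$ over roughly $N$ steps would then require a per-step discrepancy larger than the cocycle allows, which is the contradiction. It shows that $K$ contains no point, that is $K=\varnothing$.

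The main obstacle is this last step. Controlling $S_nr$ along the portion of the orbit where $\|B^s\|\to0$ is delicate, because there $\ell^s$ is not a priori controlled and $r$ may not even be well behaved. My first attempt would be to avoid $\ell^s$ near $K$ altogether. I would estimate $\|B^s_{f^kq}\|$ only from the crude bound $\|Df^k|_{E^s}\|\,\|Df^{-k}|_{E^s}\|\le e^{\epsilon k}$, which follows from near-conformality or bunching of $L|_{E^s}$ once the cases of Lemma~\ref{lemma_real} are excluded. Combined with $\|B^s_q\|\ge\delta$ and periodicity, this would bound $\|B^s\|$ from below by $\delta e^{-\epsilon N}$ along the orbit. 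That contradicts the upper bound $Ce^{-\alpha N}$ once $\epsilon<\alpha$.
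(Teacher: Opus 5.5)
Your last paragraph gives a correct argument. Its quantitative core is the same as the paper's, but it is packaged differently, and most of what comes before it is not needed.

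\textbf{The paper's route.} Fix $x\in K$ and $y\in W^s_{\rm loc}(x)$, and subtract $\eta_{f^nx}=0$. This bounds
$|\eta_y(v,w)|\le C\,d(f^nx,f^ny)^\alpha\|Df^n|_{E^s(y)}\|\,\|Df^n|_{E^u(y)}\|\le C\theta^n$, where $\theta=\rho^\alpha\kappa^{\alpha+2}<1$. So $K$ is saturated by local stable leaves and, using $f^{-1}$, by local unstable leaves. By local product structure $K$ is open, so if it is nonempty it is all of $\T^4$.

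\textbf{Your route.} Your inequality $\|B^s_q\|\le\|B^s_{f^kq}\|\,\|Df^k|_{E^s(q)}\|\,\|(Df^k|_{E^s(q)})^{-1}\|$ is the same estimate written for $B^s$. This is because $\|(Df^k|_{E^s})^{-1}\|\asymp\|Df^k|_{E^u}\|$ by the symplectic duality of Subsection~\ref{sec:bunching}, and $\|B^s_z\|\le C\|\eta_z-\eta_x\|$ for $x\in K$. The only real difference is how you bring a point of $U=\T^4\setminus K$ exponentially close to $K$. You use specification and shadowing by a periodic orbit. A point $q\in W^s(x_0)\cap U$ does the job with no shadowing at all; it exists because stable leaves are dense, or you can argue on local stable leaves as the paper does.

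Several ingredients of your proposal play no role in the argument that actually works: periodicity, the cocycle $r$, the regularity of $\ell^s$, and the Livshits-type bound. That bound is in fact immediate from $\|B^s_{f^nx}\|\asymp e^{S_nr(x)}\|B^s_x\|$ whenever both endpoints lie in $\{\|B^s\|\ge\delta\}$, and it says nothing about the excursion near $K$. The paper's route is shorter and also yields the structural fact that $K$ is $su$-saturated.

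\textbf{Bookkeeping to fix.}
\begin{enumerate}
\item Shadowing does not give $e^{-N}$-closeness over $N$ steps. Suppose that after a bounded transition time $T_0$ the orbit of $q$ stays $\eps_0$-close to that of $x_0$ for $2N$ steps. Then the distance is $\lesssim(\rho\kappa)^N$ only near the middle of that window, and there your lower bound is $\delta C^{-1}\kappa^{-2(T_0+N)}$. The contradiction therefore needs $(\rho\kappa)^\alpha\kappa^2<1$, which is exactly the paper's condition $\theta<1$, not ``$\epsilon<\alpha$''.
\item It is not the smallness of $\alpha$ that enters; a smaller $\alpha$ makes the condition harder to satisfy. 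What enters is the freedom, for the given $\alpha$, to shrink the $C^1$ neighborhood of $L$ so that $\kappa$ is close to $1$. This is legitimate because the neighborhood in Theorem~\ref{thm_dim4} may depend on $\alpha$.
\end{enumerate}
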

\begin{proof}
We show that $K$ is saturated by stable and unstable leaves. Recall that $\eta$ is $C^\alpha$. Since the distinct-real-moduli case has already been excluded, all stable eigenvalues of $L$ have a common modulus $\rho<1$ and, by symplecticity, all unstable eigenvalues have modulus $\rho^{-1}$. Possible Jordan blocks contribute only polynomial factors, which can be absorbed into $\kappa^n$ for any $\kappa>1$. Fix $\kappa>1$ so close to $1$ that $\rho^\alpha\kappa^{\alpha+2}<1$. Then, after shrinking the $C^1$ neighborhood of $L$ if needed, we have
$$
d(f^n x,f^n y)\le C(\rho\kappa)^n d(x,y),\qquad
\|Df^n|_{E^s(y)}\|\le C(\rho\kappa)^n,
\qquad
\|Df^n|_{E^u(y)}\|\le C(\rho^{-1}\kappa)^n.
$$
Setting $\theta:=\rho^\alpha\kappa^{\alpha+2}<1$, we obtain
$$
d(f^n x,f^n y)^\alpha
\|Df^n|_{E^s(y)}\|\,\|Df^n|_{E^u(y)}\|\le C\theta^n
$$
for all $y\in W^s_{\rm loc}(x)$ and all $n\ge0$, and the analogous estimate for $f^{-1}$.

Let $x\in K$ and $y\in W^s_{\rm loc}(x)$. Given unit vectors
$v,w\in T_y\T^4$, write $v=v^s+v^u$, $w=w^s+w^u$. Since $\eta$ vanishes
on $E^s\wedge E^s$ and $E^u\wedge E^u$, invariance gives
$$
\eta_y(v,w)=
\eta_{f^n y}(Df^n v^s,Df^n w^u)
+\eta_{f^n y}(Df^n v^u,Df^n w^s).
$$
Since $f^n x\in K$, we may subtract $\eta_{f^n x}$, using the standard
trivialization of $T\T^4$. By the H\"older regularity of $\eta$ and the
above estimate, the right-hand side is bounded by $C\theta^n$. Letting
$n\to\infty$ gives $\eta_y(v,w)=0$.
Hence $W^s_{\rm loc}(x)\subset K$. Applying the same argument to
$f^{-1}$ gives $W^u_{\rm loc}(x)\subset K$.

Now, if $K$ is non-empty, then local product structure implies $K=\T^4$, which is a contradiction.
\end{proof}

We now finish the proof. Assume that $\eta\not\equiv0$. Then
$K\neq\T^4$ and, by the lemma, $K=\varnothing$. Hence $B^s$ and $B^u$
have rank one at every point, and $\ell^s=\ker B^s$ is an invariant line
field. We still have two cases. If $L$ has complex stable eigenvalues,
then, for $f$ sufficiently $C^1$ close to $L$, the stable derivative at
the continuation of a fixed point of $L$ also has complex eigenvalues.
This is incompatible with the invariant line $\ell^s$. Thus $L$ has real
stable eigenvalues. By Lemma~\ref{lemma_real}, they cannot have distinct
moduli; passing to the second iterate if necessary, we may assume that
they are equal, and we proceed to argue in this case. Since $B^s$ is nilpotent,
$\operatorname{Im}B^s=\ell^s$. Also, from the formula for $\eta$ above,
$\ell^u$ is the $\omega$-annihilator of $\ell^s$ inside $E^u$. Therefore
$$
\ker\eta=\ell^s\oplus \ell^u.
$$

Subbundles $E^s_L$ and $E^u_L$ are Lagrangian subspaces and $\omega$ pairs them
non-degenerately. Choose a linear map $J\colon E^s_L\to E^u_L$ such
that
$$
\omega(v,Jv)>0,\qquad v\neq0.
$$
Set
$$
P_0=\{v+Jv:v\in E^s_L\}.
$$
For a line $\ell\subset E^s_L$ denote by $\ell^\perp$ its
$\omega$-annihilator in $E^u_L$ and set $H_\ell=\ell\oplus \ell^\perp$.
Then $P_0$ is transverse to every $H_\ell$. Indeed, if
$v+Jv\in H_\ell$, then $v\in\ell$ and $Jv\in\ell^\perp$, so
$\omega(v,Jv)=0$. This contradicts $\omega(v,Jv)>0$ unless $v=0$.

By compactness this transversality is uniform in $\ell$. Since $f$ is
$C^1$ close to $L$, the planes $\ker\eta_x=\ell^s(x)\oplus \ell^u(x)$ are
uniformly close to the family $\{H_\ell\}_{\ell\subset E^s_L}$. Hence
$P_0$ is transverse to $\ker\eta_x$ for all $x\in\T^4$. Rational
two-planes are dense in the Grassmannian, so we can choose a rational
two-plane $P$ close to $P_0$ such that we still have
$$
P\cap\ker\eta_x=\{0\},\qquad x\in\T^4.
$$

Regard $P$ as a translation-invariant plane field on $\T^4$, and let
$T_P=P/(P\cap\Z^4)$ be the corresponding rational torus. The
restriction $\eta|_{T_P}$ is a continuous nowhere vanishing $2$-form.
After orienting $T_P$ we can write $\eta|_{T_P}=\rho\,dA$, where
$\rho$ is continuous and nowhere zero. Hence $\rho$ has constant sign
and
$$
\int_{T_P}\eta\neq0.
$$
This contradicts exactness of $\eta$, since an exact form pairs
trivially with every closed cycle. Thus $\eta\equiv0$.

\begin{proof}[Proof of Addendum~\ref{add_dim4}]
Let $\omega'$ be an invariant continuous symplectic form in the same
cohomology class as $\omega$ and set $\eta=\omega-\omega'$. We need to
show that $\eta\equiv 0$. 

The case of distinct real stable eigenvalues was already treated in
Lemma~\ref{lemma_real}; its proof only uses continuity of $B$. Thus we
are in the remaining case. Assume that $\eta\neq0$. As in the proof of
Theorem~\ref{thm_dim4}, let $K=\{x:\eta_x=0\}$. By ergodicity, $K$ has
zero measure. On the full measure set $\T^4\backslash K$ we have
$$
\ker\eta_x=\ell^s(x)\oplus \ell^u(x).
$$
The line fields $\ell^s$ and $\ell^u$ are invariant and are measurable when viewed as line fields on $\T^4$.

Assume for a moment that $f$ has two distinct stable Lyapunov exponents
$\chi_1<\chi_2<0$, and let
$$
E^s=E^s_1\oplus E^s_2
$$
be the stable Oseledets splitting for $vol$. The equivariance and boundedness
of $B^s$ imply that it preserves $E^s_1$. Indeed, if $v\in E^s_1(x)$, then
$$
\|Df_x^n(B^s_xv)\|
=\|B^s_{f^n x}(Df_x^n v)\|
\le\|B^s\|_{C^0}\|Df_x^n v\|,
$$
so any non-zero $B^s_xv$ has Lyapunov exponent $\chi_1$ and therefore
belongs to $E^s_1(x)$. Since $B^s_x$ is nilpotent, its restriction to the
one-dimensional space $E^s_1(x)$ vanishes. Thus $\ell^s=E^s_1$ almost
everywhere. Choose measurable unit vectors $e_i(y)\in E^s_i(y)$. For almost
every $y\notin K$, the operator $B^s_y$ is non-zero and nilpotent, with
$\operatorname{Im}B^s_y=\ker B^s_y=\ell^s(y)$. Thus, in the basis
$\{e_1(y),e_2(y)\}$,
$$
B^s_y=\begin{pmatrix}0&b(y)\\0&0\end{pmatrix},
\qquad b(y)\neq0.
$$
Choose $\delta>0$ such that the set
$\Omega_\delta=\{y:|b(y)|\ge\delta\}$ has positive measure, and choose an
Oseledets-regular point $x\in\Omega_\delta$ which is recurrent to
$\Omega_\delta$. In the Oseledets bases at $x$ and $f^n x$, we have
$$
Df_x^n|_{E^s(x)}=\begin{pmatrix}a_n&0\\0&d_n\end{pmatrix}.
$$
The relation
$$
B^s_{f^n x}\circ Df_x^n|_{E^s(x)}
=Df_x^n|_{E^s(x)}\circ B^s_x
$$
gives
$$
b(f^n x)d_n=a_n b(x).
$$
Along return times $n_j$ for which $f^{n_j}x\in\Omega_\delta$, the ratio
$|a_{n_j}/d_{n_j}|$ is bounded above and below by positive constants
independent of $j$. Hence,
$$
\chi_1-\chi_2=\lim_{j\to\infty}\frac{1}{n_j}
\log\left|\frac{a_{n_j}}{d_{n_j}}\right|
=0,
$$
a contradiction.

 Therefore $vol$ has only one stable
Lyapunov exponent and, by symplecticity, only one unstable Lyapunov exponent.
Also, since $f$ is $C^1$ close to $L$, the stable and
unstable derivative cocycles are fiber bunched.
Thus Kalinin--Sadovskaya~\cite[Theorem~3.3 and
Corollary~3.8]{KS13} applies and implies that $\ell^s$ and $\ell^u$ coincide almost
everywhere with H\"older invariant line fields on $\T^4$. We keep the
same notation for these extensions; then
$\ker\eta\supset \ell^s\oplus \ell^u$.

Fix a smooth Riemannian metric and let
$P=(\ell^s\oplus \ell^u)^\perp$. Then $P$ is a H\"older two-dimensional bundle. Passing to
an iterate and, if necessary, to a double cover,
we may assume that the relevant bundles are oriented and that their orientations are preserved; this does not
affect the conclusion. Write $Df$ with respect to
$(\ell^s\oplus \ell^u)\oplus P$ as
$$
Df_x=
\begin{pmatrix}
A_x&C_x\\
0&Q_x
\end{pmatrix},
\qquad Q_x\colon P_x\to P_{fx}.
$$
The cocycle $Q_x$ is H\"older. Let $\rho_x$ be the area form on $P_x$
induced by the Riemannian metric, and define the positive inverse Jacobian $J_\rho$ by
$$
Q_x^*\rho_{fx}=J_\rho(x)^{-1}\rho_x .
$$
Then $J_\rho$ is a positive H\"older function. Since
$\ell^s\oplus \ell^u\subset\ker\eta$, we can write
$$
\eta|_{P_x}=u(x)\rho_x
$$
for a continuous function $u\colon\T^4\to\mathbb R$. Invariance of $\eta$ gives, for
$v,w\in P_x$,
$$
u(x)\rho_x(v,w)
=u(fx)\rho_{fx}(Q_xv,Q_xw)
=u(fx)J_\rho(x)^{-1}\rho_x(v,w).
$$
Hence
$$
u(x)J_\rho(x)=u(fx).
$$
The zero set of $u$ is invariant. Thus either $u=0$ a.e., in which case
$\eta\equiv0$ by continuity, or $u$ is non-zero a.e. In the latter case the
sign of $u$ is invariant and hence constant a.e. Reversing the orientation
of $P$ if necessary, we may assume that $u>0$ a.e. Applying the measurable
Livshits theorem~\cite{Liv72} to
$$
\log J_\rho=\log (u\circ f)-\log u
$$
we conclude that $\log u$ coincides a.e. with a H\"older function
$\varphi$. Hence $u=e^\varphi$ a.e. Since both sides are continuous, this
equality holds everywhere. Thus $u$ is positive and H\"older, so its
vanishing set $K$ is empty. The final argument in the preceding proof of
Theorem~\ref{thm_dim4} now applies and gives $\eta\equiv0$.
\end{proof}

Now we prove Theorem~\ref{theorem_residual}. We will use the same idea.
Write $\dim M=2d$. Let $\{U_j\}$ be a countable basis of open sets in
$M$. Denote by $\mathcal G_j$ the set of Anosov symplectomorphisms which
have a periodic point $p=f^n(p)$ in $U_j$ such that
$D_pf^n|_{E^s(p)}$ has simple spectrum over $\mathbb C$. The set $\mathcal G_j$ is open:
the periodic point has a continuation and simplicity of the spectrum over $\mathbb C$ persists.
Also $\mathcal G_j$ is dense. Indeed, periodic points are dense for an
Anosov diffeomorphism, and the symplectic Franks' lemma~\cite[Lemma~2.3]{CH17} allows us to
perturb the derivative along the orbit so that the stable return has
simple spectrum over $\mathbb C$. Hence
$$
\mathcal G=\bigcap_j \mathcal G_j
$$
is residual. For every $f\in\mathcal G$, periodic points with simple
stable spectrum over $\mathbb C$ are dense.

\begin{lemma}
\label{lem:wedgevanishing}
Let $\eta$ be an invariant exact continuous $2$-form. Then
$$
\eta^k\wedge\omega^{d-k}=0,\qquad k=1,\ldots,d.
$$
\end{lemma}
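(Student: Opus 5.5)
The plan mirrors the four-dimensional argument in the proof of Theorem~\ref{thm_dim4}: there, $\omega\wedge\eta=0$ came from ergodicity combined with Stokes' theorem. Here the ergodicity step needs no measure theory, because an Anosov symplectomorphism is transitive. Write $\eta=d\beta$ with $\beta$ a continuous $1$-form; if $\eta$ is only continuous, $\beta$ can be taken of class $C^{1-}$ by Hodge theory or a regularization argument. If one wants to avoid primitives of low regularity altogether, every Stokes step below can be phrased cohomologically, as explained at the end.

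First, fix $k\in\{1,\ldots,d\}$ and consider the top-degree form $\eta^k\wedge\omega^{d-k}$. It is continuous and $f$-invariant, since both $\eta$ and $\omega$ are invariant. Because $vol=\omega^d$ is a nowhere-vanishing invariant top form, we can write $\eta^k\wedge\omega^{d-k}=c_k\,vol$ for a continuous function $c_k$. Invariance of both forms gives $c_k\circ f=c_k$. An Anosov symplectomorphism is volume preserving, so it is transitive (nonwandering set equals $M$, then spectral decomposition). A continuous invariant function is constant on the closure of a dense orbit, so $c_k$ is constant.

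Second, compute the total integral:
$$
\int_M \eta^k\wedge\omega^{d-k}=c_k\int_M vol.
$$
Since $\eta$ and $\omega$ are closed, $\eta^k\wedge\omega^{d-k}=d\bigl(\beta\wedge\eta^{k-1}\wedge\omega^{d-k}\bigr)$, so the integral vanishes by Stokes. Equivalently, in cohomology the class is $[\eta]^k\cup[\omega]^{d-k}=0$ because $[\eta]=0$. Hence $c_k=0$, which is exactly the claim.

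The main obstacle is regularity: $\eta$ is only continuous, so $\eta^k$ and the Stokes identity need justification. I would handle this in one of two ways.

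\emph{Mollification.} Take smooth closed forms $\eta_\epsilon\to\eta$ in $C^0$, for instance by convolution in charts averaged via a partition of unity, or by the de~Rham regularization operators, which commute with $d$ and preserve exactness (the cohomology class). Then $\eta_\epsilon$ is exact, so $\int_M\eta_\epsilon^k\wedge\omega^{d-k}=0$. Passing to the $C^0$ limit gives $\int_M\eta^k\wedge\omega^{d-k}=0$.

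\emph{Weak formulation.} Interpret $d\eta=0$ in the sense of currents. Closedness and exactness of continuous forms are stable under wedge products with smooth closed forms, and the pairing with the fundamental class depends only on the cohomology class of the current.

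The mollification route is cleaner and is what I would write. Note that invariance is used only to make $c_k$ constant; exactness is used only to make the integral vanish, so no smoothness of $\eta$ is needed beyond the approximation.
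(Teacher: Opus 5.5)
Your argument is correct and is essentially the paper's: write $\eta^k\wedge\omega^{d-k}=c_k\,\omega^d$, use invariance to make $c_k$ constant (you use transitivity where the paper uses ergodicity; either works for a continuous invariant function), and conclude $c_k=0$ because an exact top-degree form has zero integral. Your regularization step supplies a justification the paper leaves implicit, but it is de Rham's regularization (which commutes with $d$) that does the work: chart-wise convolution glued by a partition of unity does not commute with $d$ and need not give closed approximants, and your weak-formulation remark only covers wedging with the smooth form $\omega^{d-k}$, not the powers $\eta^k$ with $k\ge 2$.
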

\begin{proof}
Each form $\eta^k\wedge\omega^{d-k}$ is an invariant exact top-degree
form. Hence, by ergodicity, it is a constant multiple of $\omega^d$.
Since it is exact, its integral is zero. Therefore this multiple is
zero.
\end{proof}

\begin{lemma}
\label{lem:nilpotent}
Let $B^s\colon E^s\to E^s$ be defined by
$$
\eta(v^s,v^u)=\omega(B^s v^s,v^u).
$$
Then $B^s_x$ is nilpotent for every $x$.
\end{lemma}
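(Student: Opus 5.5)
Fix $x\in M$. I will reduce nilpotency of $B^s_x$ to the vanishing of the traces of its powers, and get those from Lemma~\ref{lem:wedgevanishing}. The first step is to record the pointwise structure, exactly as in the four-dimensional case. By invariance and uniform contraction/expansion, both $\eta$ and $\omega$ vanish on $E^s\wedge E^s$ and on $E^u\wedge E^u$. So $\eta_x$ is determined by its mixed part, which is encoded by $B^s_x$ through $\eta(v^s,v^u)=\omega(B^s v^s,v^u)$. Choose a basis $e_1,\dots,e_d$ of $E^s(x)$ and an $\omega$-dual basis $f_1,\dots,f_d$ of $E^u(x)$, so that $\omega_x=\sum_i e^i\wedge f^i$. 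Writing $B^s_x$ as a matrix $(b_{ji})$ with $B^s e_i=\sum_j b_{ji}e_j$, we get $\eta_x=\sum_{i,j} b_{ji}\, e^i\wedge f^j$.

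The second step is linear algebra in the exterior algebra. For a $2$-form $\eta_M=\sum_{i,j}M_{ji}\,e^i\wedge f^j$ (so that $\omega$ corresponds to $M=I$), the polynomial identity
$$
(\omega+t\eta)^{d}=d!\,\det(I+tB^s_x)\;e^1\wedge f^1\wedge\cdots\wedge e^d\wedge f^d
$$
holds. This is the standard computation: $\bigwedge^d$ of $\sum_{i,j}(I+tB)_{ji}e^i\wedge f^j$ is a determinant, since the $2$-forms $e^i\wedge f^j$ commute and only terms using each $e^i$ and each $f^j$ exactly once survive, with matching signs. Expanding the left side binomially gives $\sum_k\binom dk t^k\,\eta^k\wedge\omega^{d-k}$. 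Lemma~\ref{lem:wedgevanishing} says every term with $k\ge1$ vanishes. Hence $\det(I+tB^s_x)=1$ for all $t$, so every elementary symmetric function of the eigenvalues of $B^s_x$ vanishes. The characteristic polynomial of $B^s_x$ is therefore $\lambda^d$, and Cayley--Hamilton gives $(B^s_x)^d=0$.

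The main obstacle is the sign and combinatorics check in the determinant identity. I will do it by the substitution trick: by multilinearity, change the stable basis by the matrix $I+tB$, i.e.\ set $\tilde e^j=\sum_i (I+tB)_{ji}e^i$. Then $\omega+t\eta=\sum_j \tilde e^j\wedge f^j$. This is a standard symplectic form in the coframe $\tilde e,f$, so its $d$-th power is $d!\,\tilde e^1\wedge f^1\wedge\cdots\wedge \tilde e^d\wedge f^d$. Re-expressing in the original coframe gives $\det(I+tB)$ times the volume element, because the $f$'s are unchanged. This formulation avoids sign bookkeeping entirely.

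One more point is that Lemma~\ref{lem:wedgevanishing} relies on ergodicity of $vol=\omega^d$. This holds since $f$ is a $C^2$ volume-preserving Anosov diffeomorphism, so I will simply cite it.
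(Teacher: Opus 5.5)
Your proof is correct and is essentially the paper's argument. The paper also writes $(\omega+t\eta)^d=\det(I+tB^s_x)\,\omega^d$ in $\omega$-dual bases of $E^s(x)$ and $E^u(x)$, uses Lemma~\ref{lem:wedgevanishing} to kill all non-constant coefficients, and concludes that all eigenvalues of $B^s_x$ vanish; your coframe substitution $\tilde e^j=\sum_i(I+tB)_{ji}e^i$ and the Cayley--Hamilton step just spell out what the paper leaves implicit (the identity is purely algebraic, so it holds even at the finitely many $t$ where $I+tB^s_x$ is singular).
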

\begin{proof}
As before, $\eta$ vanishes on $E^s\wedge E^s$ and on
$E^u\wedge E^u$, and we have the splitting $B=B^s\oplus B^u$. Choose
dual bases of $E^s(x)$ and $E^u(x)$ with respect to $\omega$. Then
$$
(\omega+t\eta)^d=\det(I+tB^s_x)\,\omega^d.
$$
By Lemma~\ref{lem:wedgevanishing}, all non-constant coefficients in
this polynomial vanish. Hence $\det(I+tB^s_x)\equiv1$. Therefore all
eigenvalues of $B^s_x$ are equal to zero; hence $B^s_x$
is nilpotent.
\end{proof}

Let $f\in\mathcal G$ and let $\omega'$ be an invariant continuous
symplectic form in the cohomology class of $\omega$. Set
$\eta=\omega'-\omega$. Then $\eta$ is invariant and exact. As before,
$\eta$ vanishes on $E^s\wedge E^s$ and on $E^u\wedge E^u$, and
$$
\eta(v^s,v^u)=\omega(B^s v^s,v^u).
$$
Invariance gives
$$
B^s_{f x}\circ Df|_{E^s(x)}=Df|_{E^s(x)}\circ B^s_x.
$$
Thus, if $p=f^n(p)$, then $B^s_p$ commutes with
$D_pf^n|_{E^s(p)}$. By Lemma~\ref{lem:nilpotent}, $B^s_p$ is
nilpotent. Since $D_pf^n|_{E^s(p)}$ has simple spectrum over $\mathbb C$, this implies
$B^s_p=0$. Since such periodic points are dense, $B^s\equiv0$. Then
also $B^u\equiv0$, and therefore $\eta\equiv0$. This proves that every
$f\in\mathcal G$ is symplecto-rigid.


\appendix

\section{Invariant forms for toral automorphisms}
\label{app_toral_forms}

Here we give a continuous version of Fang's result in the case of toral
automorphisms.

\begin{lemma}
\label{lemma_toral_forms}
Let $L\colon\T^d\to\T^d$ be a hyperbolic toral automorphism. Then any
continuous invariant exact $2$-form on $\T^d$ vanishes.
\end{lemma}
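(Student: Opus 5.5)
We argue by Fourier analysis. Write a continuous invariant exact $2$-form as $\eta=\sum_{i<j}\eta_{ij}(x)\,dx_i\wedge dx_j$ with continuous coefficients $\eta_{ij}\in C^0(\T^d)$. Invariance $L^*\eta=\eta$ means that the coefficient vector $\hat\eta(x)\in\Lambda^2(\R^d)^*$ satisfies
$$
\hat\eta(x)=\Lambda^2L^t\,\hat\eta(Lx),
$$
where $\Lambda^2L^t$ is the induced action on constant $2$-forms.

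\emph{Step 1: the constant part vanishes.} The zero Fourier mode $\eta_0=\int_{\T^d}\eta$ is the constant-coefficient form cohomologous to $\eta$. Since $\eta$ is exact, its cohomology class is zero, so $\eta_0=0$. (Alternatively, $\eta_0$ pairs with the coordinate $2$-tori, and all those periods vanish.) Exactness is used only here.

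\emph{Step 2: the nonzero modes vanish.} Take Fourier coefficients $\hat\eta_k\in\Lambda^2(\R^d)^*\otimes\mathbb C$ for $k\in\Z^d\setminus\{0\}$. Invariance gives the relation
$$
\hat\eta_{(L^t)^{-1}k}=\Lambda^2L^t\,\hat\eta_k,
$$
up to fixing the transpose and inverse conventions. Iterating, $\hat\eta_{(L^t)^{-n}k}=(\Lambda^2L^t)^n\hat\eta_k$ for all $n\in\Z$. Since $\eta$ is continuous, it lies in $L^2$, so Parseval gives
$$
\sum_{n\in\Z}\|(\Lambda^2L^t)^n\hat\eta_k\|^2<\infty.
$$
Here we use that the orbit $\{(L^t)^nk\}$ consists of distinct frequencies, because a hyperbolic automorphism has no nonzero periodic integer vectors. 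Hence we need the following linear-algebra claim: a vector $w$ with $\|(\Lambda^2L^t)^nw\|\to0$ as $n\to\pm\infty$ must be zero.

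\emph{Main obstacle.} The claim is not automatic, because $\Lambda^2L^t$ can have eigenvalues of modulus $1$ (products $\lambda_i\lambda_j$ with $|\lambda_i\lambda_j|=1$, which is exactly the symplectic situation). Decompose $w$ along the generalized eigenspaces of $\Lambda^2L^t$. Components with eigenvalue modulus $\neq1$ blow up in one time direction, so they must vanish. Components on the modulus-one generalized eigenspaces do not decay in either direction; Jordan blocks only make the norm grow polynomially. So summability forces those components to vanish too, and therefore $w=0$.

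Thus, rather than needing a spectral gap, the square-summability along an infinite orbit of \emph{distinct} frequencies kills every nonzero mode, and only the zero mode needs Step 1. The subtle point to verify carefully is that the distinct-frequency argument holds for every $k\neq0$. This is the fact that $(L^t)^nk=k$ with $n\ne0$ is impossible by hyperbolicity, which then gives injectivity of $n\mapsto (L^t)^nk$.

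Combining the two steps, all Fourier coefficients of $\eta$ vanish, so the continuous form $\eta$ is identically zero.
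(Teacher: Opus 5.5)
Your proof is correct and follows the same route as the paper. Along the frequency orbit $(L^t)^nk$ the Fourier coefficients transform by powers of $L^*$, and their two-sided decay forces $\widehat\eta(k)=0$ for $k\neq0$: you get the decay from Parseval with distinct frequencies, while the paper gets it from Riemann--Lebesgue using $|(L^t)^nk|\to\infty$. Exactness then kills the constant term. Your generalized-eigenspace argument, including the modulus-one eigenvalues, fills in the linear-algebra step the paper leaves implicit, and the index convention is $\widehat\eta(L^tk)=L^*\widehat\eta(k)$, which does not affect anything.
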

\begin{proof}
We use the same symbol $L\in GL(d,\Z)$ for the matrix defining the
automorphism. Let $\eta$ be a continuous invariant exact $2$-form.
Denote its 2-form Fourier coefficients by $\widehat\eta(k)$.
Then $\eta$ has the Fourier expansion
$$
\eta_x=\sum_{k\in\Z^d}e^{2\pi i\langle k,x\rangle}\widehat\eta(k),
$$
where the equality is understood in $L^2$.
Invariance $L^*\eta=\eta$ gives
$$
\widehat\eta((L^t)^n k)=(L^*)^n\widehat\eta(k),
\qquad n\in\Z.
$$
If $k\neq0$, then $|(L^t)^n k|\to\infty$ both as $n\to\infty$ and as
$n\to-\infty$. By the
Riemann--Lebesgue lemma we have
$$
(L^*)^n\widehat\eta(k)\longrightarrow0
$$
both as $n\to\infty$ and as $n\to-\infty$.  Hence
$\widehat\eta(k)=0$ for every $k\neq0$.

Thus all non-constant Fourier coefficients of $\eta$ vanish. By uniqueness
of Fourier coefficients for continuous functions, $\eta$ is a constant
form. But $\eta$ is exact; hence $\eta\equiv0$.
\end{proof}

\section{Regularity property of the holonomy}

Here we sketch the proof of Lemma~\ref{lemma}. Namely, we show that the vector field $W(\gamma(t))$ over a curve $\gamma\subset W^u(a)$, $a=\gamma(0)$, given by unstable holonomy, $W(\gamma(t))=DH^u_{a\to\gamma(t)}(W_0)$, is a $C^r$ vector field provided that the stable and unstable distributions are $C^r$, $r\ge1$.

For simplicity, assume that we are in dimension two and $a=(0,0)$. Let $\gamma(x)=(x,0)$ be the curve in the unstable leaf given by the $x$-axis, so that $F(x,0)=0$, and let $W^s(a)=\{(0,y):y\in\R\}$. The stable and unstable distributions are
$$
E^s(x,y)=\operatorname{span}\left\{ g(x,y)\frac{\partial}{\partial x}+\frac{\partial}{\partial y}\right\}
$$
and
$$
E^u(x,y)=\operatorname{span}\left\{\frac{\partial}{\partial x}+F(x,y)\frac{\partial}{\partial y}\right\},
$$
where $g$ and $F$ are $C^r$, and hence at least $C^1$, by assumption.

The unstable manifolds are given by the solutions of the ODE
\begin{multline*}
\alpha_x(x,y)=F(x,\alpha(x,y));\\
\shoveleft{\alpha(0,y)=y. \hfill}
\end{multline*}
By standard ODE theory, the solution $\alpha$ is unique and is $C^1$ as a function of two variables. Linearizing the ODE along the $x$-axis gives the vector field $\alpha_y(x,0)\frac{\partial}{\partial y}$ and, accordingly, the vector field $W$ is given by
$$
W(x,0)=\alpha_y(x,0)\left(\frac{\partial}{\partial y}+g(x,0)\frac{\partial}{\partial x}\right)
.$$
Hence, we need to show that $\alpha_y(x,0)$ is a $C^1$ function of $x$; in fact, we prove that $\alpha_y(x,y)$ is $C^1$ in $x$ for every $y$. Consider the approximations
$$
\alpha(x,y;\Delta y)=\big(\alpha(x,y+\Delta y)-\alpha(x,y)\big)/\Delta y
.$$
Differentiating these approximations with respect to $x$ and using the ODE gives
$$
\alpha_x(x,y;\Delta y)=\big(F(x,\alpha(x,y+\Delta y))-F(x,\alpha(x,y))\big)/\Delta y,
$$
which converges uniformly on compact $x$-intervals as $\Delta y\to 0$ to
$F_y(x,\alpha(x,y))\alpha_y(x,y)$. On the other hand, since $\alpha$
is $C^1$, the functions $\alpha(x,y;\Delta y)$ converge uniformly to
$\alpha_y(x,y)$. The fundamental theorem of calculus gives
$$
\alpha(x,y;\Delta y)
=\alpha(0,y;\Delta y)
+\int_0^x\alpha_x(s,y;\Delta y)\,ds
=1+\int_0^x\alpha_x(s,y;\Delta y)\,ds,
$$
where we used the initial condition $\alpha(0,y)=y$. Moreover,
$$
\left|\int_0^x\alpha_x(s,y;\Delta y)\,ds
-\int_0^xF_y(s,\alpha(s,y))\alpha_y(s,y)\,ds\right|
\le x\sup_{s}\left|\alpha_x(s,y;\Delta y)
-F_y(s,\alpha(s,y))\alpha_y(s,y)\right|\longrightarrow 0.
$$
Thus we may pass to the limit in the preceding integral identity and
obtain
$$
\alpha_y(x,y)=1+\int_0^x
F_y(s,\alpha(s,y))\alpha_y(s,y)\,ds.
$$
Consequently,
$$
\frac{\partial}{\partial x}\alpha_y(x,y)
=F_y(x,\alpha(x,y))\alpha_y(x,y),
$$
and $\alpha_y(x,y)$ is $C^1$ in $x$. In particular,
$\alpha_y(x,0)$ is $C^1$ in $x$, and the displayed formula for $W$
shows that $W$ is $C^1$. This finishes the proof in the case when $r=1$. If $r=1+\theta$,
$0<\theta<1$, then the right-hand side is $\theta$-H\"older in $x$,
and hence both $\alpha_y(x,0)$ and $W$ are $C^{1+\theta}$ in $x$. More generally, differentiating the variational equation inductively shows that $\alpha_y(x,0)$ and $W$ are $C^r$ in $x$ for every $r\ge1$.

Finally, the higher-dimensional case can be proved using the same method, but the notation for the derivatives is cumbersome.
\bibliographystyle{alpha}

\bibliography{bib}

\end{document}